\numberwithin{equation}{section}
\newtheorem{thm}{Theorem}[section]
\newtheorem{lem}[thm]{Lemma}
\newcommand{\LR}[1]{{\langle {#1} \rangle }}
\newcommand{\EQ}[1]{\begin{equation} \begin{split} #1
 \end{split} \end{equation}}
\newcommand{\EQS}[1]{\begin{align} #1 \end{align}}
\title[Ill-posedness for HWS equation]{Ill-posedness for the Half wave Schr\"odinger  equation} 
\author[I. Kato]{Isao Kato} 
\email[Isao Kato]{kato.isao.23n@st.kyoto-u.ac.jp}
\subjclass[2010]{35Q55, 35A01, 35A02}
\keywords{Cauchy problem, ill-posedness} 
\begin{document}

\begin{abstract}
We study ill-posedness for the half wave Schr\"odinger equation introduced by Xu \cite{Xu}. 
Ill-posedness is obtained in the super-critical or at the critical space. 
The proof is based on the argument established by Christ, Colliander and Tao \cite{CCT1}. 
For the critical space, we use the standing wave solution, which was proved the existence by Bahri, Ibrahim and Kikuchi \cite{BIK}.     
\end{abstract}
\maketitle
\setcounter{page}{001}

\section{Introduction} 
We consider the Cauchy problem for the following equation: 
\EQS{ \label{NLS} 
 \begin{cases} 
 i\partial_t u + \partial_x^2 u - |D_y| u = \mu |u|^{p-1}u, \qquad (t, x, y) \in \mathbb{R}^3, \\ 
 u(0, x, y) = u_0(x, y),  
 \end{cases} 
}
where $|D_y| = (-\partial_y^2)^{\frac{1}{2}}, \mu = \pm 1, p > 1$. 
\eqref{NLS} with $\mu = 1, p = 3$ is firstly considered by Xu \cite{Xu} on the cylinder $(x, y) \in \mathbb{R} \times \mathbb{T}$.    
In \cite{Xu}, large time behavior for \eqref{NLS}, namely the modified scattering for small regular solution is derived. 
For the case $\mu = -1$ and $1 < p < 5$, Bahri, Ibrahim and Kikuchi \cite{BIK} proved existence and the stability of the ground states $Q_{\omega}(x, y)$ with $\omega > 0$ which satisfy 
\begin{align*}
 -\partial_x^2 Q + |D_y|Q + \omega Q - |Q|^{p-1}Q = 0. 
\end{align*} 
They also found the traveling wave solutions $e^{i\omega t} Q_{\omega, v}(x, y-vt), \omega > 0, v \in \mathbb{R}$ exist and derived the behavior of $Q_{\omega, v}$ in $H^{1, 0}(\mathbb{R}^2)$ or $L^2(\mathbb{R}^2)$ as the velocity $v$ tends to $1$. 
See the definition of $H^{s_1, s_2}(\mathbb{R}^2)$ in the following section.   
Here $Q_{\omega, v}$ is a solution to the following equation: 
\begin{align*}
 -\partial_x^2 Q + |D_y|Q + iv\partial_y Q + \omega Q - |Q|^{p-1} Q = 0. 
\end{align*} 

Concerning the Cauchy problem, \cite{BIK} showed the local well-posedness in the scale  sub-critical space, namely $H^{0, s}(\mathbb{R}^2)$ with $s > \frac{1}{2}$ and $1 < p \leqslant 5$ by the standard fixed point argument (see section 2 for the definition of scale sub-critical space)\footnote{We can also obtain the local well-posedness in $H^{1, 0}(\mathbb{R}^2) \cap H^{0, s}(\mathbb{R}^2), s > \frac{1}{2}$ for \eqref{NLS}, $\mu = \pm 1$.}.   
The following Strichartz estimate is a key role to show local well-posedness: 
For $S(t) := e^{it(\partial_x^2 - |D_y|)}$ and $s > \frac{1}{2}$, there exists $C > 0$ such that  
\begin{align*} 
 &\|S(t)f\|_{L^4_T L^{\infty}_{x, y}} \leqslant C\|f\|_{H^{0, s}}, \qquad f \in H^{0, s}(\mathbb{R}^2), \\ 
 &\left\| \int_0^t S(t-s)F(s) \, \mathrm{d}s \right\|_{L^4_T L^{\infty}_{x, y}} \leqslant C\|F\|_{L^1_T H^{0, s}},\qquad F \in L^1_T H^{0, s}(\mathbb{R}^2). 
\end{align*} 
So far there is no result for lower regularity space, for instance the energy space $E := H^{1, 0}(\mathbb{R}^2) \cap H^{0, \frac{1}{2}}(\mathbb{R}^2)$ 
equipped with the norm 
\begin{align*}
 \|u\|_E := \left(\|\partial_x u\|_{L^2}^2 + \||D_y|^{\frac{1}{2}} u\|_{L^2}^2 + \|u\|_{L^2}^2  \right)^{\frac{1}{2}}.  
\end{align*} 
In Theorem \ref{norm_inflation} below, we find that for $p > 5$ the ill-posedness (norm inflation) holds in $E$ since $E$ is the super-critical space in this case. 
However for $1 < p < 5$ (resp. $p = 5$), $E$ is the sub-critical (resp. scale-critical) space and  
we do not obtain any result yet.      
We explain some difficulties to show well-posedness in $E$ for $1 < p \leqslant 5$. 
The first one is the restriction of regularity with respect to $y$ variable, namely $s > \frac{1}{2}$ for the above Strichartz estimate. 
We do not know whether the Strichartz estimate hold or not for $s = \frac{1}{2}$. 
If we neglect the term $\partial_x^2 u$ and $x$ variable, \eqref{NLS} is nothing but the half wave equation and it is ill-posed in $H^s(\mathbb{R})$ for $s < \frac{1}{2}$ and $p = 3$  
(see e.g. Theorem 1.3 \cite{CP}). 
For the delicate case $s = \frac{1}{2}$, one may apply the argument in the cubic Szeg\H{o} equation \cite{GG} or the half wave equation \cite{KLR}.  
However, for instance \eqref{NLS} with $p = 3$, the Judovi$\rm{\check{c}}$ argument, which was applied to show uniqueness of the solution for Szeg\H{o} equation \cite{GG} or the half wave equation \cite{KLR}, is difficult to apply.  
In fact, we cannot replace the right-hand side of the following inequality $X$ by the energy space 
$E$. 
For $q \in [2, \infty), u \in X = H^{\frac{1}{2}, \frac{1}{2}}(\mathbb{R}^2)\ \text{or} \ H^1(\mathbb{R}^2)$, there exists $C > 0$ such that the following holds:  
\begin{align*}
 \|u\|_{L^q(\mathbb{R}^2)} \leqslant C \sqrt{q}\, \|u\|_{X}. 
\end{align*}
This is the second difficulty.  
Moreover, the Gagliardo-Nirenberg inequality (Lemma 2.1 \cite{BIK}):  
\begin{align*}
 \|u\|_{L^{q+1}(\mathbb{R}^2)}^{q+1} \leqslant C_{\text{GN}} \|\partial_x u\|_{L^2(\mathbb{R}^2)}^{\frac{q-1}{2}} \| |D_y|^{\frac{1}{2}} u\|_{L^2(\mathbb{R}^2)}^{q-1} \|u\|_{L^2(\mathbb{R}^2)}^{\frac{5-q}{2}},\qquad C_{\text{GN}} > 0   
\end{align*} 
holds only for $1 < q < 5$.  
However, we need $q \to \infty$ to show the uniqueness. 
These are the main reasons why we cannot apply the standard technique. 
Thus the Cauchy problem \eqref{NLS} in $E$ for $1 < p \leqslant 5$ is an interesting open problem. 

The aim of this paper is to investigate the ill-posedness for \eqref{NLS} in the super-critical (resp. at the critical) space $H^{s_1, s_2}$ for $\mu = \pm 1$ (resp. $\mu = -1$) since there is no ill-posedness result.   
The first statement is the norm inflation in the super-critical space.  
\begin{thm} \label{norm_inflation}
 Let $p > 1$ be an odd integer and $\mu = \pm 1$. 
 Suppose that either 
 $s_1, s_2 \geqslant 0$ except for $s_1 = s_2 = 0$ and $s_1 + 2s_2 < \frac{3}{2}-\frac{2}{p-1}$ 
 or 
 $s_1, s_2 \leqslant -\frac{1}{2}$. 
 Then for any $\varepsilon > 0$ there exist a solution $u$ of \eqref{NLS} and $t > 0$ such that $u(0) \in \mathcal{S}(\mathbb{R}^2)$, 
 \begin{align*}
  \|u(0)\|_{H^{s_1, s_2}} < \varepsilon, \qquad 
  0 < t < \varepsilon, \qquad  
  \|u(t)\|_{H^{s_1, s_2}} > \frac{1}{\varepsilon}.  
 \end{align*} 
 If $p > 1$ is not an odd integer, then the same conclusion holds provided that there exists an integer $k > 1$ such that $p \geqslant k+1$ and either 
 $s_1, s_2 \geqslant 0$ except for $s_1 = s_2 = 0$ and $s_1 + 2s_2 < \frac{3}{2} - \frac{2}{p-1}$ 
 or 
 $s_1, s_2 \leqslant -\frac{1}{2}$.  
\end{thm}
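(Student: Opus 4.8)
\medskip
\noindent\emph{Proof strategy.}
The plan is to follow the scheme of Christ--Colliander--Tao \cite{CCT1}: isolate the non-dispersive part of \eqref{NLS}, build an approximate solution from it, show the true solution stays close to it on a short time interval, and check that the approximate solution already undergoes norm inflation there. Two structural facts are used throughout. First, the Fourier multiplier $-\x^2-|\y|$ of $S(t)=e^{it(\partial_x^2-|D_y|)}$ is real, so $S(t)$ is a unitary group on every $H^{s_1,s_2}(\R^2)$; in particular $\|S(t)f\|_{H^{s_1,s_2}}=\|f\|_{H^{s_1,s_2}}$. Second, when $p$ is an odd integer the nonlinearity $N(u):=\mu|u|^{p-1}u=\mu\,u^{\frac{p+1}{2}}\ol u^{\frac{p-1}{2}}$ is a polynomial in $(u,\ol u)$, for which all the Sobolev product estimates below are immediate; when $p$ is not an odd integer the hypothesis that some integer $k>1$ satisfies $p\geq k+1$ is used only to guarantee $N\in C^{k}$ with $k\geq2$, which is enough for the (finitely many) fractional-Leibniz and Taylor estimates that occur, the rest of the argument being identical.

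\medskip
\noindent\emph{Approximate solution and choice of data.}
Dropping $\partial_x^2u-|D_y|u$ from \eqref{NLS} leaves the pointwise ODE $i\partial_t v=\mu|v|^{p-1}v$, solved explicitly by $v(t,x,y)=v_0(x,y)\,e^{-i\mu t|v_0(x,y)|^{p-1}}$, since $|v(t,x,y)|$ is conserved at each point $(x,y)$. The parabolic--wave anisotropy of \eqref{NLS} is encoded in the scaling $u\mapsto\la^{\frac{2}{p-1}}u(\la^2t,\la x,\la^2y)$, under which the data norm carries the weight $\la^{\frac{2}{p-1}+s_1+2s_2-\frac32}$, so the hypothesis $s_1+2s_2<\frac32-\frac{2}{p-1}$ is precisely sub-scaling of the data norm. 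I would therefore take a two-parameter family of Schwartz data $u_0=\ka\,\la^{a}\,\phi(\la x,\la^2y)\,e^{i(\la^{b_1}x+\la^{b_2}y)}$, with $\phi\in\mathcal S(\R^2)$ fixed and non-trivial, $\la\to\infty$ a concentration parameter, and $\ka$ a slowly growing amplitude that breaks the exact scaling. In the range $s_1,s_2\geq0$ one takes $b_1=b_2=0$ and $a$ equal to (or just below) the scaling value $\frac{2}{p-1}$, so that the hypothesis forces $\|u_0\|_{H^{s_1,s_2}}\to0$; in the range $s_1,s_2\leq-\tfrac12$ one instead pushes the data to frequency $\sim\la^{b_i}\to\infty$, so that the negative Sobolev weights send $\|u_0\|_{H^{s_1,s_2}}\to0$ even though the $L^2$ mass stays large. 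The approximate solution $\ti u(t):=u_0\,e^{-i\mu t|u_0|^{p-1}}$ carries an oscillatory phase of amplitude $\sim t\,\ka^{p-1}\la^{a(p-1)}$; differentiating it once in $x$ (resp.\ $y$) brings down a factor $\sim t\,\ka^{p-1}\la^{a(p-1)+1}$ (resp.\ $t\,\ka^{p-1}\la^{a(p-1)+2}$), and a short computation shows that for a suitable choice $t=t_\la\to0$ (still $t_\la<\e$) one has $\|\ti u(t_\la)\|_{H^{s_1,s_2}}\to\infty$ as $\la\to\infty$, while $\|u_0\|_{H^{s_1,s_2}}\to0$. Fixing $a,b_1,b_2,\ka$ and $t_\la$ reduces to solving the resulting system of exponent inequalities; this optimisation, together with the endpoint analysis that produces the sharp thresholds $\frac32-\frac{2}{p-1}$ and $-\frac12$, is the part that needs care in this anisotropic setting.

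\medskip
\noindent\emph{Error estimate and conclusion.}
Since $u_0\in\mathcal S(\R^2)$, a standard contraction argument in a high Sobolev space (Duhamel, unitarity of $S(t)$, and the $C^{k}$/polynomial bounds on $N$) produces a solution $u$ of \eqref{NLS} on $[0,t_\la]$, $t_\la$ being small. Writing $w=u-\ti u$, one has $i\partial_tw+\partial_x^2w-|D_y|w=\big(N(u)-N(\ti u)\big)-(\partial_x^2-|D_y|)\ti u$ with $w(0)=0$, so Duhamel and unitarity give
\EQQ{
\|w(t)\|_{H^{s_1,s_2}}\ \leq\ \int_0^t\big\|N(u)-N(\ti u)\big\|_{H^{s_1,s_2}}\,dt'\ +\ \int_0^t\big\|(\partial_x^2-|D_y|)\ti u\big\|_{H^{s_1,s_2}}\,dt'.
}
The first integrand is $\lesssim\big(\|u\|_{H^{s_1,s_2}}+\|\ti u\|_{H^{s_1,s_2}}\big)^{p-1}\|w\|_{H^{s_1,s_2}}$ and is absorbed by a Gronwall-type bootstrap (working in a higher Sobolev norm controlling $H^{s_1,s_2}$, in which the data norm, though large, is fixed); the second is the genuine forcing, and here the dispersive operator $\partial_x^2-|D_y|$ strikes the rapidly oscillating phase of $\ti u$ and brings down exactly the large factors recorded above, so one must verify that, once the anisotropic weights and the non-smooth symbol $|\y|$ of $|D_y|$ are accounted for, $\|w(t_\la)\|_{H^{s_1,s_2}}=o\big(\|\ti u(t_\la)\|_{H^{s_1,s_2}}\big)$. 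This smallness-of-the-error step, which is what actually pins down the time scale $t_\la$, is where I expect the main difficulty to lie. Granting it, $\|u(t_\la)\|_{H^{s_1,s_2}}\geq\|\ti u(t_\la)\|_{H^{s_1,s_2}}-\|w(t_\la)\|_{H^{s_1,s_2}}\to\infty$ while $\|u_0\|_{H^{s_1,s_2}}\to0$ and $t_\la\to0$, so taking $\la$ large produces, for every $\e>0$, a solution $u$ and a time $t=t_\la$ with the asserted properties. Finally, $s_1=s_2=0$ is excluded because the $L^2$ norm is conserved by \eqref{NLS}, so no inflation can occur in $H^{0,0}=L^2$; and for $p$ not an odd integer the same computation goes through with the $C^{k}$ Taylor expansion of $N$ replacing the polynomial identities, which is the sole reason one requires an integer $k>1$ with $p\geq k+1$.
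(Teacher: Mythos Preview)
Your outline follows CCT in spirit, but the error-control step as written has a genuine gap, and the missing ingredient is structural rather than a matter of bookkeeping. The product bound $\|N(u)-N(\ti u)\|_{H^{s_1,s_2}}\lesssim(\|u\|_{H^{s_1,s_2}}+\|\ti u\|_{H^{s_1,s_2}})^{p-1}\|w\|_{H^{s_1,s_2}}$ is false in the regularity ranges at issue ($H^{s_1,s_2}$ is not an algebra there), and your proposed fix of passing to a high Sobolev norm $H^k$ does not rescue the argument: in that norm the forcing $(\partial_x^2-|D_y|)\ti u$ is enormous, since each derivative landing on the phase $e^{-i\mu t|u_0|^{p-1}}$ extracts a large power of $\la$, so Gronwall only closes on a time interval far too short for $\|\ti u\|_{H^{s_1,s_2}}$ to inflate. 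With a single concentration parameter there is no way to make the dispersive forcing small and the phase growth large simultaneously.

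The device that resolves this, and which the paper imports from CCT, is a \emph{two-parameter} scheme decoupling the smallness of dispersion from the spatial concentration. One introduces an independent parameter $0<\nu\ll1$ and first studies the small-dispersion equation
\[
i\partial_t\phi+\nu^2\partial_x^2\phi-\nu^2|D_y|\phi=\mu|\phi|^{p-1}\phi
\]
with fixed Schwartz data; an energy method in the \emph{weighted} space $\mathcal H^{k,k}$ (an algebra) gives $\|\phi-\phi^{(0)}\|_{\mathcal H^{k,k}}\leq C\nu$ uniformly for $|t|\leq c|\log\nu|^c$, because here the forcing is $\nu^2(\partial_x^2-|D_y|)\phi^{(0)}=O(\nu^2)$. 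One then returns to \eqref{NLS} by the \emph{exact} transformation
\[
u(t,x,y)=\la^{-\frac{2}{p-1}}\phi\!\left(\tfrac{t}{\la^2},\tfrac{\nu}{\la}x,\tfrac{\nu^2}{\la^2}y\right),
\]
so no further error estimate is needed. Choosing $\la=\nu^{\sigma}$ with $\sigma>1$ tuned to the exponent deficit $\frac32-\frac{2}{p-1}-s_1-2s_2$ then gives small data and large solution at time $\la^2 t$ with $t\sim|\log\nu|^c$. Your frequency-modulation parameters $b_1,b_2$ play no role in the inflation argument (in the paper the Galilean parameter $v$ is set to $0$ throughout Section~3; it enters only in the decoherence Theorem~\ref{Decoherence}), and the non-smoothness of the symbol $|\eta|$ is not an obstruction anywhere.
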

The second result is the decoherence in $H^{s_1, s_2}$ with non-positive $s_1, s_2$. 
Here, the decoherence means that the flow map is not uniformly continuous at time $0$ in $H^{s_1, s_2}$. 
\begin{thm} \label{Decoherence}
 Let $p > 1$ be an odd integer and $\mu = \pm 1$. 
 For any $s_1, s_2 \leqslant 0, 0 < \delta, \varepsilon < 1$ and $t > 0$ 
 there exist $C > 0$ and solutions of \eqref{NLS} with initial data $u_1(0), u_2(0) \in \mathcal{S}(\mathbb{R}^2)$ such that 
 \begin{align}
  &\|u_1(0)\|_{H^{s_1, s_2}}, \|u_2(0)\|_{H^{s_1, s_2}} \leqslant C\varepsilon,  \label{estimate_initialdata} \\ 
  &\|u_1(0) - u_2(0)\|_{H^{s_1, s_2}} \leqslant C\delta, \label{difference_initialdata} \\ 
  &\|u_1(t) - u_2(t)\|_{H^{s_1, s_2}} \geqslant C\varepsilon. \label{difference_solution}   
 \end{align}  
If $p > 1$ is not an odd integer, then the same conclusion holds provided that there exists an integer $k > \frac{3}{2}$ such that $p \geqslant k + 1$ and $s_1, s_2 \leqslant 0$.  
\end{thm}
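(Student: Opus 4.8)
The plan is to follow the scheme of Christ--Colliander--Tao \cite{CCT1}. The dispersionless equation $i\partial_t u=\mu|u|^{p-1}u$ is solved explicitly by the pure phase rotations
\begin{align*}
 \mathcal{R}(t)u_0:=u_0\,e^{-i\mu t|u_0|^{p-1}},\qquad \big|\mathcal{R}(t)u_0\big|=|u_0|,
\end{align*}
and, after a \emph{fixed} time, a tiny change of the amplitude of $u_0$ changes the accumulated phase by an amount of size one. I would therefore take
\begin{align*}
 u_1(0,x,y)=a\,\phi_\lambda(x,y),\qquad u_2(0,x,y)=(a+h)\,\phi_\lambda(x,y),
\end{align*}
where $\phi\in\mathcal{S}(\mathbb{R}^2)$ is a fixed real profile with $\phi\not\equiv 0$, $\phi_\lambda(x,y):=\lambda^{\frac{2}{p-1}}\phi(\lambda x,\lambda^2 y)$ is the rescaling attached to the scaling symmetry $u\mapsto\lambda^{\frac{2}{p-1}}u(\lambda^2 t,\lambda x,\lambda^2 y)$ of \eqref{NLS}, and $\lambda\gg 1$, $a>0$, $h\in(0,a)$ are parameters. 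Since $\phi_\lambda$ is concentrated at frequencies $\sim(\lambda,\lambda^2)$ and $s_1,s_2\leqslant 0$ (with $p\geqslant 3$ throughout the stated range), one has $\|\phi_\lambda\|_{H^{s_1,s_2}}\sim\lambda^{-\beta}$ with $\beta:=\frac32-\frac{2}{p-1}-s_1-2s_2>0$, so \eqref{estimate_initialdata} and \eqref{difference_initialdata} become $a\lambda^{-\beta}\lesssim\varepsilon$ and $h\lambda^{-\beta}\lesssim\delta$, while $u_j(0)\in\mathcal{S}(\mathbb{R}^2)$ is automatic.

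For the phase-rotated profiles $v_j(t):=\mathcal{R}(t)u_j(0)=u_j(0)e^{i\theta_j}$ one checks that, on the set of measure $\sim\lambda^{-3}$ where $|\phi(\lambda x,\lambda^2 y)|$ is close to $\|\phi\|_{L^\infty}$, the phase discrepancy $\theta_1-\theta_2=-\mu t\big(a^{p-1}-(a+h)^{p-1}\big)\lambda^{2}|\phi(\lambda\cdot,\lambda^2\cdot)|^{p-1}$ is $\gtrsim 1$ in modulus as soon as $t\,p\,a^{p-2}h\,\lambda^{2}\sim 1$; tracking the frequency support of $v_j(t)$ (which is governed by the size $\sim t\,a^{p-1}\lambda^{2}$ of the phase) one then gets $\|v_1(t)-v_2(t)\|_{H^{s_1,s_2}}\gtrsim a\lambda^{-\beta}$, i.e. \eqref{difference_solution} for $(v_1,v_2)$ reads $a\lambda^{-\beta}\gtrsim\varepsilon$. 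Eliminating $h$, the four relations collapse to $a\sim\varepsilon\lambda^{\beta}$ together with $a\gtrsim(tp\delta)^{-1/(p-2)}\lambda^{-(\beta+2)/(p-2)}$, which are compatible once $\lambda$ is taken large in terms of $\varepsilon,\delta,t,p,s_1,s_2$; all the implied constants are then absorbed into a single $C$. The step I expect to be most delicate is precisely this bookkeeping when $\delta\ll\varepsilon$: then $t\,a^{p-1}\lambda^{2}$ is large, $v_j(t)$ is supported at frequencies $\gg(\lambda,\lambda^2)$, and the lower bound \eqref{difference_solution} must be obtained using that $\langle\xi\rangle^{s_1}\langle\eta\rangle^{s_2}$ is comparable to a negative power of that frequency scale there, which is what pins down the exact power of $\lambda$ above.

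It remains to replace the dispersionless profiles $v_j$ by genuine solutions $u_j$ of \eqref{NLS}. As $u_j(0)\in\mathcal{S}(\mathbb{R}^2)\subset H^{0,s}(\mathbb{R}^2)$ for every $s$, the local theory of \cite{BIK} yields solutions $u_j$ on an interval containing $[0,t]$. Setting $r_j:=u_j-v_j$ one has $r_j(0)=0$ and
\begin{align*}
 i\partial_t r_j+\partial_x^2 r_j-|D_y|r_j=\mu\big(|u_j|^{p-1}u_j-|v_j|^{p-1}v_j\big)-\big(\partial_x^2 v_j-|D_y|v_j\big),
\end{align*}
so $r_j$ is driven by the dispersive error $\partial_x^2 v_j-|D_y|v_j$. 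An energy estimate in a sufficiently regular Sobolev space, together with a continuity/Gronwall argument, should give $\|u_j(t)-v_j(t)\|_{H^{s_1,s_2}}=o(\varepsilon)$; this is where the nonlinearity must be of class $C^{k}$ for an integer $k>\frac32$ (so that the relevant space is a Banach algebra embedded in $L^\infty$), which is automatic when $p$ is an odd integer and is exactly the requirement $p\geqslant k+1$ otherwise. The main obstacle is controlling the Duhamel term $\int_0^t S(t-s)\big(\partial_x^2 v_j-|D_y|v_j\big)\,\mathrm{d}s$ -- keeping it small relative to $\|v_j(t)\|_{H^{s_1,s_2}}$ uniformly on $[0,t]$ while respecting the constraints on $a,h,\lambda$ from the previous steps -- which, following \cite{CCT1}, I would handle by using the scaling symmetry to trade the fixed time $t$ for a short time on which $\partial_x^2$ and $|D_y|$ are genuinely lower order than the nonlinearity. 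Once this is done, the bounds \eqref{estimate_initialdata}--\eqref{difference_solution} for $v_j$ transfer to $u_j$, completing the proof; the non-odd case differs only in keeping track of the finite regularity of $|u|^{p-1}u$ in the energy estimate.
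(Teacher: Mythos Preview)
Your single-parameter scaling cannot separate the dispersive and nonlinear time scales, and this is a genuine gap. With $u_j(0)=a_j\phi_\lambda$ concentrated at frequency $(\lambda,\lambda^2)$ and amplitude $\lambda^{2/(p-1)}$, one has $|u_j(0)|^{p-1}\sim\lambda^2$ and $\partial_x^2\sim|D_y|\sim\lambda^2$ \emph{simultaneously}: the nonlinear rotation and the dispersion become $O(1)$ effects on the same time scale $\sim\lambda^{-2}$. Your decoherence condition $t\,a^{p-2}h\,\lambda^{2}\sim1$ with $h\ll1$ forces $t\lambda^{2}\gg1$, so the Duhamel contribution of $(\partial_x^2-|D_y|)v_j$ is already of size $\gtrsim\|v_j\|$ long before the phases separate; the approximation $u_j\approx v_j$ then fails and the lower bound \eqref{difference_solution} does not transfer. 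The suggestion to ``use the scaling symmetry to trade the fixed time for a short time on which $\partial_x^2$ and $|D_y|$ are lower order'' cannot work as stated, because the scaling of \eqref{NLS} rescales the dispersion and the nonlinearity identically --- no choice of $\lambda$ alone makes one subordinate to the other.

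The paper (following \cite{CCT1}) resolves this with a genuine \emph{two}-parameter construction: first one solves the small-dispersion problem \eqref{small_dispersion} and shows via Lemma~\ref{lemma1} that $\phi^{(a,\nu)}$ stays $O(\nu)$-close to the ODE profile for times $|t|\le c|\log\nu|^c$; only then does one rescale back to \eqref{NLS} via \eqref{u}, introducing an independent parameter $\lambda$ with $\nu\le\lambda$. For $s_1<0$ the Galilean boost $v$ in \eqref{u} is also essential: it shifts the data to $x$-frequency $\sim|v|\gg1$ (producing the factor $|v|^{s_1}$ in \eqref{est_initialdata}) while the spatial profile sits at low frequency $\nu/\lambda\le1$, so dispersion remains weak on the relevant time scale. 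Your proposal omits both the small-dispersion parameter and the boost. Relatedly, the hypothesis $k>\tfrac32$ is not a Banach-algebra requirement for the energy estimate: in the paper it arises from Lemma~\ref{lemma2}, where one must make the tail term $(\lambda/\nu)^{-k}|v|^{-s_1-k}|\log\nu|^{C}$ in \eqref{solution_lower_bound} vanish as $\nu\to0$ with $\lambda=\nu^{\sigma}$ and $|v|$ a suitable negative power of $\nu$; the exponent computation there is exactly what forces $k>\tfrac32$.
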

The proof of Theorems \ref{norm_inflation} and \ref{Decoherence} are based on the work of Christ, Colliander and Tao \cite{CCT1} (Theorems 2, 1). 
In \cite{CCT1}, they proved the norm inflation and the decoherence for the Schr\"odinger equation and the wave equation with power type nonlinearity $\mu |u|^{p-1}u, \mu = \pm 1$, by regarding the nonlinearity as the main term and the linear dispersive part as the perturbation.   
To show the decoherence for the Schr\"odinger equation, the Galilean invariance parameter plays a crucial role. 
Using this idea with suitable modification (since \eqref{NLS} has different dispersion in $x$ and $y$ variable), we can obtain the ill-posedness for \eqref{NLS}. 
We remark that we consider the ill-posedness in the super-critical space in Theorems \ref{norm_inflation} and \ref{Decoherence} and in Theorem \ref{Decoherence} more regularity (or weight) $k$ is needed than in Theorem 1 \cite{CCT1}.    

The third main result is also the decoherence, and we consider at the scale critical space with non-negative $s_1, s_2$.  
\begin{thm} \label{decoherence_critical} 
 Let $1 < p < 5$ and $\mu = -1$. 
 For any $s_1, s_2 \geqslant 0, s_1 + 2s_2 = \frac{3}{2} - \frac{2}{p-1}, 0 < \delta, \varepsilon < 1$  and $t > 0$ 
 there exist solutions of \eqref{NLS} with initial data 
 $u_1(0), u_2(0) \in H^{s_1, s_2}(\mathbb{R}^2)$ such that 
 \eqref{difference_initialdata} and \eqref{difference_solution} hold in Theorem \ref{Decoherence}.   
\end{thm}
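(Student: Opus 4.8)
The plan is to build the two solutions from the ground state of \cite{BIK}, using the scaling symmetry of \eqref{NLS} to freeze the critical homogeneous norm while forcing the phase of the standing wave to rotate arbitrarily fast.

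First I record the scaling. Equation \eqref{NLS} with $\mu=-1$ is invariant under $u(t,x,y)\mapsto u_\lambda(t,x,y):=\lambda^{2/(p-1)}u(\lambda^2 t,\lambda x,\lambda^2 y)$ ($\lambda>0$), and $\|u_\lambda(0)\|_{\Hd^{s_1,s_2}}=\lambda^{2/(p-1)+s_1+2s_2-3/2}\|u(0)\|_{\Hd^{s_1,s_2}}$; hence $\|\cdot\|_{\Hd^{s_1,s_2}}$ is exactly scale invariant on the line $s_1+2s_2=\tfrac32-\tfrac2{p-1}$, and on that line $\|\lambda^{2/(p-1)}g(\lambda\cdot,\lambda^2\cdot)\|_{H^{s_1,s_2}}\le C_0\|g\|_{H^{s_1,s_2}}$ uniformly for $\lambda\ge1$. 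Let $Q$ be a ground state at $\omega=1$ from \cite{BIK}, solving $-\partial_x^2Q+|D_y|Q+Q=|Q|^{p-1}Q$, and for $\omega>0$ put $Q_\omega(x,y):=\omega^{1/(p-1)}Q(\omega^{1/2}x,\omega y)$, which solves $-\partial_x^2Q_\omega+|D_y|Q_\omega+\omega Q_\omega=|Q_\omega|^{p-1}Q_\omega$, so that $e^{i\omega t}Q_\omega$ is a global solution of \eqref{NLS}. Since $Q\in E=H^{1,0}\cap H^{0,1/2}$, whose Fourier weight is comparable to $\langle\xi\rangle^2+\langle\eta\rangle$, and since for $s_1,s_2\ge0$ with $s_1+2s_2=\tfrac32-\tfrac2{p-1}<1$ one has $\langle\xi\rangle^{2s_1}\langle\eta\rangle^{2s_2}\le(\langle\xi\rangle^2+\langle\eta\rangle)^{s_1+2s_2}\le\langle\xi\rangle^2+\langle\eta\rangle$, every $Q_\omega$ lies in $H^{s_1,s_2}(\mathbb{R}^2)$; the dilation formula shows $\omega\mapsto Q_\omega$ is continuous into $H^{s_1,s_2}$; and by scale invariance $\|Q_\omega\|_{\Hd^{s_1,s_2}}=\|Q\|_{\Hd^{s_1,s_2}}=:A$ for every $\omega$, with $A>0$ because $\widehat Q$ is not supported on $\{\xi=0\}\cup\{\eta=0\}$.

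Next, given $\delta,\varepsilon,t$, take $\omega_1=1$, $\omega_2=1+h$ with $h\in(0,\tfrac12)$ and $\lambda\ge1$ still to be fixed, and let
\[
u_j(t,x,y):=\lambda^{2/(p-1)}e^{i\omega_j\lambda^2 t}Q_{\omega_j}(\lambda x,\lambda^2 y)\qquad(j=1,2),
\]
the $\lambda$-rescalings of the standing waves $e^{i\omega_j t}Q_{\omega_j}$, hence global solutions of \eqref{NLS} with data in $H^{s_1,s_2}$. By the uniform scaling bound and continuity of $\omega\mapsto Q_\omega$,
\[
\|u_1(0)-u_2(0)\|_{H^{s_1,s_2}}\le C_0\|Q_1-Q_{1+h}\|_{H^{s_1,s_2}}\longrightarrow0\quad(h\to0),
\]
uniformly in $\lambda\ge1$. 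At time $t$, using the exact scale invariance of $\|\cdot\|_{\Hd^{s_1,s_2}}$ and $\|\cdot\|_{H^{s_1,s_2}}\ge\|\cdot\|_{\Hd^{s_1,s_2}}$,
\[
\|u_1(t)-u_2(t)\|_{H^{s_1,s_2}}\ge\big\|e^{i\lambda^2 t}Q_1-e^{i(1+h)\lambda^2 t}Q_{1+h}\big\|_{\Hd^{s_1,s_2}}\ge|1-e^{ih\lambda^2 t}|\,A-\|Q_1-Q_{1+h}\|_{\Hd^{s_1,s_2}}.
\]
Now fix $h>0$ small enough that $C_0\|Q_1-Q_{1+h}\|_{H^{s_1,s_2}}\le\tfrac A2\delta$ and $\|Q_1-Q_{1+h}\|_{\Hd^{s_1,s_2}}\le\tfrac A2$, and then fix $\lambda$ large via $\lambda^2=(2k+1)\pi/(ht)$ with $k\in\mathbb{N}$ large, so that $|1-e^{ih\lambda^2 t}|=2$; then $\|u_1(t)-u_2(t)\|_{H^{s_1,s_2}}\ge 2A-\tfrac A2\ge A$. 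With $C:=A/2$ (independent of $\delta,\varepsilon$), the choice of $h$ gives \eqref{difference_initialdata} and, since $\varepsilon<1$, the previous bound gives \eqref{difference_solution}.

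The only nontrivial input is the bundle of facts about $Q$ imported from \cite{BIK}: existence of the ground state for $\mu=-1$, $1<p<5$; its membership in $E$; and, via the explicit $\omega$-rescaling, the continuity of $\omega\mapsto Q_\omega$ together with the $\omega$-independence of $\|Q_\omega\|_{\Hd^{s_1,s_2}}$. Everything else is routine: the scaling of \eqref{NLS}, the phase drift $e^{ih\lambda^2 t}$, the elementary embedding $E\hookrightarrow H^{s_1,s_2}$ along the critical line, and sending $\lambda\to\infty$. The step where the critical relation $s_1+2s_2=\tfrac32-\tfrac2{p-1}$ is essential is exactly the one making $\|Q_\omega\|_{\Hd^{s_1,s_2}}$ an $\omega$-free positive constant: on a subcritical line the rescaled norm would decay and the separation at time $t$ would be lost, while on a supercritical line it would blow up and destroy \eqref{difference_initialdata}. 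I expect that keeping the scaling, the $\omega$-family, and the inhomogeneous corrections consistently aligned is the only delicate bookkeeping; there is no deeper obstacle once the ground state machinery of \cite{BIK} is in hand.
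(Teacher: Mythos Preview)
Your proof is correct and follows essentially the same approach as the paper: both use the standing-wave family $e^{i\beta t}Q_\beta$ together with the scale invariance of $\|\cdot\|_{\Hd^{s_1,s_2}}$ on the critical line, and extract the separation at time $t$ from the phase factor $e^{i(\beta_1-\beta_2)t}$. Your parametrization $(\omega_j,\lambda)$ is just a relabeling of the paper's $\beta_j$ (since the $\lambda$-rescaled $Q_{\omega_j}$ is exactly $Q_{\omega_j\lambda^2}$), but it has the advantage that, by fixing $h$ first and then choosing $\lambda^2=(2k+1)\pi/(ht)$, you genuinely obtain the conclusion for \emph{every} prescribed $t>0$; the paper's specific choice $\beta_1=n+1$, $\beta_2=n$, $t=(2n+1)\pi$ forces $t\to\infty$ and so, as written, does not quite match the quantifier in the theorem (though it is repaired by the same idea you use, taking $\beta_1-\beta_2=\pi/t$ and $\beta_2=n\to\infty$).
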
 
We note that in Theorem \ref{decoherence_critical}, we only consider the focusing nonlinearity $\mu = -1$ and $1 < p < 5$ since we use the standing wave solutions for \eqref{NLS} with $\mu = -1$ obtained by \cite{BIK}. 

Let us introduce an organization of this paper. 
In section 2, we give some notations and lemmas (Lemmas \ref{lemma1}, \ref{lemma2}). 
By the usual energy method, we obtain Lemma \ref{lemma1}, namely local well-posedness for  \eqref{small_dispersion} (the equation with small dispersion parameter $\nu$) with regular initial data.  
In section 3, we prove the norm inflation in the super-critical space. 
In section 4, we prove the decoherence. 
We apply Lemma \ref{lemma2} together with the scaling to show the decoherence occurs in a very short time.  
Finally in section 5, we prove the decoherence at the critical space for the focusing case.

\section{Preliminaries} 
$\hat{f}$ denotes the Fourier transform of $f$ with respect to the spatial variable $x$ and $y$. 
We denote the anisotropic Sobolev spaces $H^{s_1, s_2}(\mathbb{R}^2)$ and $\dot{H}^{s_1, s_2}$ as  
\begin{align*}
 &H^{s_1, s_2}(\mathbb{R}^2) = \{ f \in \mathcal{S}'(\mathbb{R}^2)\, | \, \|f\|_{H^{s_1, s_2}} < \infty \}, \\ 
 &\|f\|_{H^{s_1, s_2}} := \left(\int_{\mathbb{R}^2} \LR{\xi}^{2s_1} \LR{\eta}^{2s_2} |\hat{f}(\xi, \eta)|^2 \, \mathrm{d}\xi \, \mathrm{d}\eta \right)^{\frac{1}{2}}, \qquad \LR{\cdot} := \sqrt{1 + |\cdot|^2}, \\ 
 &\dot{H}^{s_1, s_2}(\mathbb{R}^2) = \{ f \in \mathcal{S}'(\mathbb{R}^2)\, | \, \|f\|_{\dot{H}^{s_1, s_2}} < \infty \}, \\ 
 &\|f\|_{\dot{H}^{s_1, s_2}} := \left(\int_{\mathbb{R}^2} |\xi|^{2s_1} |\eta|^{2s_2} |\hat{f}(\xi, \eta)|^2 \, \mathrm{d}\xi \, \mathrm{d}\eta \right)^{\frac{1}{2}}. 
\end{align*} 

\eqref{NLS} is scale-invariant under the transformation 
\begin{align*}
 u_{\lambda}(t, x, y) = \lambda^{-\frac{2}{p-1}} u\left( \frac{t}{\lambda^2}, \frac{x}{\lambda}, \frac{y}{\lambda^2} \right)   
\end{align*} 
for all $\lambda > 0$. 
If $s_1, s_2$ satisfy 
\begin{align} 
 s_1 + 2s_2 = \frac{3}{2} - \frac{2}{p-1}, 
                         \label{scale_critical}
\end{align}  
then we have 
$\|u_{\lambda}(0)\|_{\dot{H}^{s_1, s_2}} = \|u_0\|_{\dot{H}^{s_1, s_2}}$. 
We call $H^{s_1, s_2}$ scale critical\footnote{The energy space $H^{1, 0}(\mathbb{R}^2) \cap H^{0, \frac{1}{2}}(\mathbb{R}^2)$ is scale critical when $p = 5$.} if $s_1, s_2$ satisfy \eqref{scale_critical} and super-critical if $s_1, s_2$ satisfy 
\begin{align*}
 s_1 + 2s_2 < \frac{3}{2} - \frac{2}{p-1}. 
\end{align*} 
Similarly the scale sub-critical space $H^{s_1, s_2}$ be such that $s_1, s_2$ satisfy 
\begin{align*}
 s_1 + 2s_2 > \frac{3}{2} - \frac{2}{p-1}. 
\end{align*}

$\mathcal{H}^{k, k}$ denotes the weighted Sobolev space endowed with the norm 
\begin{align*} 
 \|u\|_{\mathcal{H}^{k, k}} := \sum_{i = 0}^k \| (1 + |x| + |y|)^{k-i}\, \partial^i u\|_{L^2(\mathbb{R}^2)}. 
\end{align*}

Let $\phi_0 = aw,  a \in [\frac{1}{2}, 1]$ and the Schwartz function $w \in \mathcal{S}(\mathbb{R}^2)$ be given and let $\phi = \phi^{(a, \nu)}(t, x, y)$ satisfies  
\EQS{ \label{small_dispersion}
 \begin{cases} 
  i\partial_t \phi + \nu^2 \partial_x^2 \phi - \nu^2 |D_y| \phi = \mu |\phi|^{p-1} \phi, \\ 
  \phi(0, x, y) = \phi_0(x, y) \in \mathcal{S}(\mathbb{R}^2), 
 \end{cases} 
} 
where $0 < \nu \ll 1, \mu = \pm 1$.  
By letting $\nu \to 0$, \eqref{small_dispersion} approaches 
\EQS{ \label{ode} 
 \begin{cases} 
  i\partial_t \phi = \mu |\phi|^{p-1} \phi, \\ 
  \phi(0, x, y) = \phi_0(x, y) \in \mathcal{S}(\mathbb{R}^2).    
 \end{cases} 
}
The solution $\phi = \phi^{(0)}$ of \eqref{ode} can be expressed as 
\begin{align} 
 \phi^{(0)} = \phi_0 e^{-i\mu t |\phi_0|^{p-1}}. 
                     \label{phi0_ode}
\end{align}

For $\frac{1}{2} \leqslant a \leqslant 1, 0 < \lambda \ll 1, v \in \mathbb{R}$ we set 
\begin{align}
 u^{(a, \nu, \lambda, v)}(t, x, y) := \lambda^{-\frac{2}{p-1}}e^{-\frac{i}{2}v \cdot x} e^{-\frac{i}{4}|v|^2 t} \phi^{(a, \nu)}\left( \frac{t}{\lambda^2}, \frac{\nu}{\lambda}(x+vt), \frac{\nu^2}{\lambda^2}y \right). 
                       \label{u}
\end{align}
Then $u = u^{(a, \nu, \lambda, v)}$ satisfies 
\begin{align*}
 i\partial_t u + \partial_x^2 u - |D_y| u = \mu |u|^{p-1} u. 
\end{align*}

The following lemma will be applied in the proof of Theorems \ref{norm_inflation}, \ref{Decoherence}. 
\begin{lem} {\rm(Lemma 2.1  \cite{CCT1})} \label{lemma1}
Let $p \geqslant 1, k > 1$ and let $\phi^{(0)}(t)$ satisfies \eqref{phi0_ode}. 
If $p$ is not an odd integer, then we also assume $p \geqslant k + 1$. 
Then there exist $C, c > 0$ depending on $p, k$ such that the following holds: 
For sufficiently small $0 < \nu \leqslant c$ and $T = c|\log \nu|^c$, 
there exists a solution $\phi(t) \in C^1([-T, T]; \mathcal{H}^{k, k})$ of \eqref{small_dispersion} satisfying    
\begin{align} 
 \|\phi(t) - \phi^{(0)}(t)\|_{\mathcal{H}^{k, k}} \leqslant C\nu
                  \label{est_small_dispersion}
\end{align} 
for all $|t| \leqslant c|\log \nu|^c$.   
\end{lem}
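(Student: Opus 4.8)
The plan is to treat the nonlinearity $\mu|\phi|^{p-1}\phi$ as the main term and the dispersive part $\nu^2(\partial_x^2 - |D_y|)\phi$ as a small perturbation, then run an energy-type continuity argument in the weighted space $\mathcal{H}^{k,k}$ on the time interval $[-T,T]$ with $T = c|\log\nu|^c$. First I would record the smoothing/boundedness properties of the ODE flow: from the explicit formula \eqref{phi0_ode}, $\phi^{(0)}(t) = \phi_0 e^{-i\mu t|\phi_0|^{p-1}}$, and since $\phi_0 = aw \in \mathcal{S}(\mathbb{R}^2)$ one checks by Leibniz that $\|\phi^{(0)}(t)\|_{\mathcal{H}^{k,k}}$ grows at most polynomially in $t$ — this is where the hypothesis that $p$ is an odd integer (so $|\phi_0|^{p-1} = (\phi_0\overline{\phi_0})^{(p-1)/2}$ is smooth and polynomially bounded with all derivatives) or else $p \geqslant k+1$ (so $|\phi_0|^{p-1}$ is $C^k$ and the chain rule through $k$ derivatives stays controlled despite the non-smoothness of $|\cdot|^{p-1}$ at the origin) enters. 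Let $\langle t\rangle^{M}$ with $M = M(p,k)$ be such a bound.

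Next I would set $r(t) := \phi(t) - \phi^{(0)}(t)$ and derive its equation:
\begin{align*}
 i\partial_t r + \nu^2(\partial_x^2 - |D_y|)r = \mu\bigl(|\phi|^{p-1}\phi - |\phi^{(0)}|^{p-1}\phi^{(0)}\bigr) - \nu^2(\partial_x^2 - |D_y|)\phi^{(0)}.
\end{align*}
The point is that $\nu^2(\partial_x^2 - |D_y|)$ is skew-adjoint on $L^2$, and more relevantly one needs that it does not grow the $\mathcal{H}^{k,k}$ norm too badly when combined with the weights; the standard device (as in \cite{CCT1}) is to work with the equivalent norm built from $(1+|x|+|y|)$ and $\partial^i$ and estimate the commutators of the weight with $\partial_x^2$ and with the nonlocal operator $|D_y|$. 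I would then perform the $\mathcal{H}^{k,k}$ energy estimate on $r$: the Duhamel/energy inequality gives
\begin{align*}
 \frac{d}{dt}\|r(t)\|_{\mathcal{H}^{k,k}} \lesssim \|\,|\phi|^{p-1}\phi - |\phi^{(0)}|^{p-1}\phi^{(0)}\,\|_{\mathcal{H}^{k,k}} + \nu^2\|(\partial_x^2 - |D_y|)\phi^{(0)}\|_{\mathcal{H}^{k,k}} + (\text{commutator terms}).
\end{align*}
The first term is handled by the algebra/Moser estimate for $\mathcal{H}^{k,k}$ (using $k>1$ so that $\mathcal{H}^{k,k}\hookrightarrow L^\infty$) and the Lipschitz bound on $z\mapsto |z|^{p-1}z$, yielding a factor $\lesssim (\|\phi^{(0)}\|_{\mathcal{H}^{k,k}}^{p-1} + \|r\|_{\mathcal{H}^{k,k}}^{p-1})\|r\|_{\mathcal{H}^{k,k}}$; the source term is bounded by $\nu^2\langle t\rangle^{M}$ using the polynomial bound above together with the fact that $(\partial_x^2 - |D_y|)$ costs at most two $x$-derivatives or one $y$-derivative, still inside $\mathcal{S}$.

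Finally I would close the argument by a bootstrap: assume $\|r(t)\|_{\mathcal{H}^{k,k}}\leqslant 1$ (say) on a maximal subinterval; then the differential inequality reads $\frac{d}{dt}\|r\| \lesssim \langle t\rangle^{M}\|r\| + \nu^2\langle t\rangle^{M}$, and Gronwall gives $\|r(t)\| \lesssim \nu^2 \exp(C\langle t\rangle^{M+1}) \leqslant C\nu$ as long as $\langle t\rangle^{M+1} \leqslant c'|\log\nu|$, i.e. for $|t|\leqslant c|\log\nu|^{c}$ with $c = 1/(M+1)$ and $c$ small; this both recovers the bootstrap hypothesis (for $\nu$ small) and produces the claimed bound \eqref{est_small_dispersion}, while the local existence of $\phi(t)\in C^1([-T,T];\mathcal{H}^{k,k})$ follows from the same energy estimates via a standard fixed-point/continuation argument (this is essentially Lemma \ref{lemma1} quoted from \cite{CCT1}, adapted to the anisotropic dispersion). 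The main obstacle is the treatment of the nonlocal operator $|D_y|$ against the polynomial weight $(1+|x|+|y|)^{k-i}$: unlike $\partial_x^2$, the operator $|D_y|$ is not local, so $[(1+|x|+|y|)^{k-i}, |D_y|]$ is not a differential operator and must be estimated by, e.g., writing $|D_y| = \partial_y \mathcal{R}_y$ with $\mathcal{R}_y$ the Hilbert transform in $y$ (bounded on all the relevant spaces) and commuting the weight through $\partial_y$ plus a Calderón–commutator-type bound for $[(1+|y|)^m, \mathcal{R}_y]$; granting such a commutator estimate, everything else is the routine Gronwall scheme of \cite{CCT1}.
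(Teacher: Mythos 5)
Your proposal follows essentially the same route as the paper: decompose $\phi = \phi^{(0)} + \psi$, bound the source $\nu^2(\partial_x^2 - |D_y|)\phi^{(0)}$ by $C\nu^2(1+t)^C$ using the polynomial growth of the ODE solution in $\mathcal{H}^{k,k}$ (this is where the odd-$p$ or $p \geqslant k+1$ hypothesis enters), estimate the nonlinear difference by Moser-type bounds on $F(z)=\mu|z|^{p-1}z$, and close with a bootstrap plus Gronwall on $|t| \leqslant c|\log \nu|^c$. The one point you treat more carefully than the paper is the commutator of the polynomial weight with the nonlocal operator $|D_y|$, which the paper silently absorbs into the extra term $C\|\psi(t)\|_{\mathcal{H}^{k,k}}$ on the right-hand side of its energy inequality \eqref{energy_ineq}; your Hilbert-transform factorization is a legitimate way to justify that term.
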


\begin{proof}
Let $\phi = \phi^{(0)} + \psi$ where $\phi, \phi^{(0)}$ satisfy \eqref{small_dispersion}, \eqref{phi0_ode} respectively.  
Then $\psi$ satisfies 
\begin{align*} 
 \begin{cases} 
  i\partial_t \psi + \nu^2 \partial_x^2 \psi - \nu^2 |D_y| \psi = -\nu^2 \partial_x^2 \phi^{(0)} + \nu^2 |D_y| \phi^{(0)} + \mu |\phi^{(0)} + \psi|^{p-1} (\phi^{(0)} + \psi) - \mu |\phi^{(0)}|^{p-1} \phi^{(0)},  \\ 
  \psi(0, x, y) = 0.   
  \end{cases} 
\end{align*}  
To show \eqref{est_small_dispersion} we need to prove  
\begin{align} \label{apriori_bound}
 \sup_{|t| \leqslant T} \|\psi(t)\|_{\mathcal{H}^{k, k}} \leqslant C \nu   
\end{align}      
for $0 \leqslant T \leqslant c|\log \nu|^c$.  
For simplicity, let $t \geqslant 0$. 
The energy inequality gives 
\begin{align}
 \partial_t \|\psi(t)\|_{\mathcal{H}^{k, k}} 
 &\leqslant C\| -\nu^2(\partial_x^2 - |D_y|)\phi^{(0)}(t) + \mu \bigl(|\phi^{(0)} + \psi|^{p-1}(\phi^{(0)} + \psi) - |\phi^{(0)}|^{p-1} \phi^{(0)}\bigr)(t) \|_{\mathcal{H}^{k, k}} \notag \\ 
 &\hspace{3em} + C \|\psi(t)\|_{\mathcal{H}^{k, k}}. 
                   \label{energy_ineq}
\end{align} 
From $\eqref{phi0_ode}, \phi_0 \in \mathcal{S}(\mathbb{R}^2)$ and 
\begin{align} 
 \|\phi^{(0)}(t)\|_{\mathcal{H}^{k, k}} + \|\phi^{(0)}(t)\|_{C^k} \leqslant C(1+t)^k, 
                   \label{decay_initial}
\end{align}
we obtain 
\begin{align} 
 \|(\partial_x^2 - |D_y|)\phi^{(0)}(t)\|_{\mathcal{H}^{k, k}} \leqslant C(1+t)^k. 
              \label{phi0_linear_est} 
\end{align} 
We show 
\begin{align} 
 \|F(\phi^{(0)}+\psi)(t) - F(\phi^{(0)})(t)\|_{\mathcal{H}^{k, k}} 
 \leqslant C(1+t)^C \|\psi(t)\|_{\mathcal{H}^{k, k}}(1+\|\psi(t)\|_{\mathcal{H}^{k, k}})^{p-1},   
              \label{inhomo_est} 
\end{align}
where $F: \mathbb{C} \to \mathbb{C}$, $F(z) := \mu |z|^{p-1} z$.   
For $j = 0, 1, ... , k$, we see 
\begin{align*}
 &|F^{(j)}(z)| \leqslant C(1+|z|)^{p-j}, \\ 
 &|F^{(j)}(z+\tilde{z}) - F^{(j)}(z)| \leqslant C |\tilde{z}| \, (1+|z|+|\tilde{z}|)^{p-j-1}.  
\end{align*} 
Thus by the chain rule, the Leibnitz rule and \eqref{decay_initial} implies 
\begin{align*}
 &|F^{(j)}(\phi^{(0)}+\psi)(t, x, y) - F^{(j)}(\phi^{(0)})(t, x, y)| \\ 
 &\leqslant C(1+t)^C \sum_{\alpha_1, ... , \alpha_r}|\psi^{(\alpha_1)}(t, x, y)| \cdots |\psi^{(\alpha_r)}(t, x, y)| (1+|\psi(t, x, y)|^{p-r})   
\end{align*} 
for each $0 \leqslant j \leqslant k$, where $\alpha_1, ... , \alpha_r$ range over all finite collections of non-negative integers with $1 \leqslant r \leqslant k+1$ and $\alpha_1 + ... + \alpha_r \leqslant k$. 
Thus \eqref{inhomo_est} follows from the H\"older inequality and the Sobolev inequality. 
Collecting \eqref{energy_ineq}, \eqref{phi0_linear_est} and \eqref{inhomo_est}, 
\begin{align*}
 \partial_t \|\psi(t)\|_{\mathcal{H}^{k, k}} \leqslant C\nu^2(1+t)^C + C(1+t)^C \|\psi(t)\|_{\mathcal{H}^{k, k}} + C(1+t)^C \|\psi(t)\|_{\mathcal{H}^{k, k}}^p,   
\end{align*} 
where $C > 0$ depends on $k$. 
Under the a priori assumption $\|\psi(t)\|_{\mathcal{H}^{k, k}} \leqslant 1$, the above inequality becomes 
\begin{align*}
 \partial_t \|\psi(t)\|_{\mathcal{H}^{k, k}} \leqslant C\nu^2 (1+t)^C + C(1+t)^C \|\psi(t)\|_{\mathcal{H}^{k, k}}^p.  
\end{align*}
By the Gronwall inequality and $\psi(0) = 0$, we have 
\begin{align*}
 \|\psi(t)\|_{\mathcal{H}^{k, k}} \leqslant C \nu^2 e^{C(1+t)^C}, 
\end{align*}
when $0 \leqslant t \leqslant c|\log \nu|^c$ for suitably chosen $c$ and sufficiently small $\nu > 0$. 
Then \eqref{apriori_bound} follows and the a priori assumption $\|\psi(t)\|_{\mathcal{H}^{k, k}} \leqslant 1$ is attained. 
Hence we obtain the desired result.    
\end{proof} 

We prepare the following lemma for the proof of Theorem \ref{Decoherence}. 
\begin{lem} {\rm(Lemma 3.1 \cite{CCT1})} \label{lemma2} 
Let $0 \neq w \in \mathcal{S}(\mathbb{R}^2)$ and $s_1, s_2 < 0$. 
Suppose that $p, k, \phi = \phi^{(a, \nu)}, \phi^{(0)}$ be given in Lemma \ref{lemma1}.  
Also assume that $|v| \geqslant 1, \frac{1}{2} \leqslant a, a' \leqslant 2$ and $0 < \nu \leqslant \lambda \ll 1$. 
Then for $u^{(a, \nu, \lambda, v)}$ given by \eqref{u} there exist $C, c > 0$ such that the followings hold: 
\begin{align} 
 &\|u^{(a, \nu, \lambda, v)}(0)\|_{H^{s_1, s_2}} \leqslant C \lambda^{-\frac{2}{p-1}} |v|^{s_1}\left( \frac{\lambda}{\nu} \right)^{\frac{3}{2}}, \label{est_initialdata} \\ 
 &\|u^{(a, \nu, \lambda, v)}(0) - u^{(a', \nu, \lambda, v)}(0)\|_{H^{s_1, s_2}} \leqslant C \lambda^{-\frac{2}{p-1}}|v|^{s_1} \left( \frac{\lambda}{\nu} \right)^{\frac{3}{2}} |a - a'|,  
                          \label{est_difference_initialdata} \\ 
 &\|u^{(a, \nu, \lambda, v)}(t) - u^{(a', \nu, \lambda, v)}(t)\|_{H^{s_1, s_2}} \notag \\ 
 &\geqslant C \lambda^{-\frac{2}{p-1}} |v|^{s_1} \left( \frac{\lambda}{\nu} \right)^{\frac{3}{2}} \left( \left\| \phi^{(a, \nu)}\left( \frac{t}{\lambda^2} \right) - \phi^{(a', \nu)}\left( \frac{t}{\lambda^2} \right) \right\|_{H^{0, s_2}} - C|\log \nu|^C \left( \frac{\lambda}{\nu} \right)^{-k} |v|^{-s_1-k} \right) 
                          \label{solution_lower_bound}
\end{align}  
whenever $|t| \leqslant c|\log \nu|^c \lambda^2$. 
\end{lem}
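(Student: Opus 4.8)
The plan is to reduce all three estimates to computing an anisotropic negative‑order Sobolev norm of a Galilean‑modulated, anisotropically rescaled Schwartz bump and then to read off how $\nu,\lambda,v$ enter. Throughout write $u:=u^{(a,\nu,\lambda,v)}$, $u':=u^{(a',\nu,\lambda,v)}$, and $L:=\lambda/\nu\ge 1$; recall $|v|\ge 1$. The only place where the dispersion plays a role is that the Galilean boost in \eqref{u} acts on the Schr\"odinger variable $x$ only, so in frequency the bumps concentrate near $\xi\sim v$, $\eta\sim 0$, which is what makes $\langle\xi\rangle^{s_1}\approx|v|^{s_1}$ and $\langle\eta\rangle^{s_2}\approx 1$ on their supports.

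For \eqref{est_initialdata} and \eqref{est_difference_initialdata}: at $t=0$ the function in \eqref{u} equals $\lambda^{-2/(p-1)}a\,e^{-ivx/2}w(\tfrac\nu\lambda x,\tfrac{\nu^{2}}{\lambda^{2}}y)$, with Fourier transform $\lambda^{-2/(p-1)}a\,L^{3}\widehat w(L(\xi+\tfrac v2),L^{2}\eta)$. I would plug this into $\|\cdot\|_{H^{s_1,s_2}}^{2}$ and substitute $\xi'=L(\xi+v/2)$, $\eta'=L^{2}\eta$ (Jacobian $L^{-3}$), which gives $\lambda^{-4/(p-1)}a^{2}L^{3}\int\langle\tfrac{\xi'}{L}-\tfrac v2\rangle^{2s_1}\langle\tfrac{\eta'}{L^{2}}\rangle^{2s_2}|\widehat w(\xi',\eta')|^{2}\,d\xi'\,d\eta'$. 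Since $s_2<0$ we have $\langle\eta'/L^{2}\rangle^{2s_2}\le 1$, and Peetre's inequality with $L\ge 1$, $|v|\ge 1$ gives $\langle\xi'/L-v/2\rangle^{2s_1}\lesssim|v|^{2s_1}\langle\xi'\rangle^{2|s_1|}$; as $w\in\mathcal S$ the remaining integral is a finite constant, which is \eqref{est_initialdata}. Estimate \eqref{est_difference_initialdata} is identical after replacing $a$ by $a-a'$, because the two modulated bumps differ only through that amplitude.

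For \eqref{solution_lower_bound}, set $\Psi:=\phi^{(a,\nu)}(t/\lambda^{2})-\phi^{(a',\nu)}(t/\lambda^{2})$. From \eqref{u}, $u(t)-u'(t)=\lambda^{-2/(p-1)}e^{-iv^{2}t/4}e^{-ivx/2}\Psi(\tfrac\nu\lambda(x+vt),\tfrac{\nu^{2}}{\lambda^{2}}y)$, and the same change of variables turns $\|u(t)-u'(t)\|_{H^{s_1,s_2}}^{2}$ into $\lambda^{-4/(p-1)}L^{3}\int\langle\tfrac{\xi'}{L}-\tfrac v2\rangle^{2s_1}\langle\tfrac{\eta'}{L^{2}}\rangle^{2s_2}|\widehat\Psi(\xi',\eta')|^{2}\,d\xi'\,d\eta'$. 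I would then keep only the region $|\xi'|\le L|v|/2$, on which $|v|/2\le|\xi'/L-v/2|\le 3|v|/2$ so $\langle\xi'/L-v/2\rangle\sim|v|$ (using $|v|\ge1$) and hence $\langle\xi'/L-v/2\rangle^{2s_1}\gtrsim|v|^{2s_1}$; and I would use $\langle\eta'/L^{2}\rangle\le\langle\eta'\rangle$ (from $L\ge1$), hence $\langle\eta'/L^{2}\rangle^{2s_2}\ge\langle\eta'\rangle^{2s_2}$ for all $\eta'$ since $s_2<0$. This gives $\|u(t)-u'(t)\|_{H^{s_1,s_2}}^{2}\gtrsim\lambda^{-4/(p-1)}L^{3}|v|^{2s_1}\int_{|\xi'|\le L|v|/2}\langle\eta'\rangle^{2s_2}|\widehat\Psi|^{2}$. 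Writing this integral as $\|\Psi\|_{H^{0,s_2}}^{2}$ minus the tail over $|\xi'|>L|v|/2$, and bounding the tail by $(L|v|/2)^{-2k}\|\partial_{x}^{k}\Psi\|_{L^{2}}^{2}\lesssim L^{-2k}|v|^{-2k}\|\Psi\|_{\mathcal H^{k,k}}^{2}$, one arrives at $\|u(t)-u'(t)\|_{H^{s_1,s_2}}^{2}\gtrsim\lambda^{-4/(p-1)}L^{3}|v|^{2s_1}\bigl(\|\Psi\|_{H^{0,s_2}}^{2}-C|\log\nu|^{2C}L^{-2k}|v|^{-2k}\bigr)$. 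Taking square roots (when the right‑hand side of \eqref{solution_lower_bound} is $\le 0$ there is nothing to prove; otherwise $\|\Psi\|_{H^{0,s_2}}$ exceeds the error term, so $X^{2}-Y^{2}\ge(X-Y)^{2}$ applies), and relaxing $|v|^{-k}$ to the larger $|v|^{-s_1-k}$ (legitimate since $s_1<0$, $|v|\ge1$) while enlarging $C$, yields \eqref{solution_lower_bound}.

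The step I expect to be the main obstacle is the bound $\|\Psi\|_{\mathcal H^{k,k}}\lesssim|\log\nu|^{C}$ used to control the high‑frequency tail. I would obtain it from Lemma~\ref{lemma1}: $\|\phi^{(a,\nu)}(s)\|_{\mathcal H^{k,k}}\le\|\phi^{(0)}(s)\|_{\mathcal H^{k,k}}+C\nu\lesssim(1+|s|)^{k}+\nu$ by \eqref{est_small_dispersion} and \eqref{decay_initial} (with $\phi^{(0)}$ the profile \eqref{phi0_ode} of amplitude $a$, and similarly for amplitude $a'$), so that on the admissible time range $|s|=|t|/\lambda^{2}\le c|\log\nu|^{c}$ the norm is $\lesssim|\log\nu|^{C}$. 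This is exactly where the regularity/weight index $k$ is consumed — and where the hypothesis $p\ge k+1$ in the non–odd‑integer case enters, through Lemma~\ref{lemma1} — and it is the origin of the error term $|\log\nu|^{C}(\lambda/\nu)^{-k}|v|^{-s_1-k}$ in \eqref{solution_lower_bound}. Everything else is bookkeeping of the anisotropic scaling exponents $\lambda^{-2/(p-1)}$, $(\lambda/\nu)^{3/2}$, $|v|^{s_1}$ along the substitution above.
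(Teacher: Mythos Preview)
Your argument is correct and follows essentially the same route as the paper's proof: compute the Fourier transform of the modulated, anisotropically rescaled profile, control the weights on the region $|\xi'|\lesssim L|v|$, and bound the complementary high-frequency tail via the $\mathcal H^{k,k}$ control supplied by Lemma~\ref{lemma1} (the paper splits at $|\xi'|\le L|v|/4$ rather than using Peetre for \eqref{est_initialdata}, but this is cosmetic). One minor slip: the lower bound $|v|/2\le|\xi'/L-v/2|$ you assert on $\{|\xi'|\le L|v|/2\}$ is false (it vanishes at $\xi'=Lv/2$), but since $s_1<0$ only the upper bound $|\xi'/L-v/2|\le|v|$ is needed to conclude $\langle\xi'/L-v/2\rangle^{2s_1}\gtrsim|v|^{2s_1}$, so your conclusion stands.
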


\begin{proof}
From \eqref{u}, we have 
\begin{align*}
 [u^{(a, \nu, \lambda, v)}(0)]^{\wedge}(\xi, \eta) 
 = \lambda^{-\frac{2}{p-1}} \left( \frac{\lambda}{\nu} \right)^3 [\phi^{(a, \nu)}(0)]^{\wedge}\left( \frac{\lambda}{\nu}\left( \xi + \frac{v}{2} \right), \frac{\lambda^2}{\nu^2} \eta \right).    
\end{align*} 
By Lemma \ref{lemma1}, for $0 < \nu \leqslant c$ 
there exists the solution $\phi(t, x, y) = \phi^{(a, \nu)}(t, x, y)$ to \eqref{small_dispersion} 
with initial data $\phi^{(a, \nu)}(0, x, y) = aw(x, y),\, 0 \neq w \in \mathcal{S}(\mathbb{R}^2)$.   
From $\frac{1}{2} \leqslant a \leqslant 2$ and $s_1, s_2 < 0$ we have  
\begin{align*} 
 &\|u^{(a, \nu, \lambda, v)}(0)\|_{H^{s_1, s_2}}^2 
 =C a^2 \lambda^{-\frac{4}{p-1}}\left( \frac{\lambda}{\nu} \right)^3 \int_{\mathbb{R}^2} \left(1 + \left|\frac{\nu}{\lambda}\xi - \frac{v}{2}\right| \right)^{2s_1} \left( 1 + \left| \frac{\nu^2}{\lambda^2} \eta \right| \right)^{2s_2} |\widehat{w}(\xi, \eta)|^2 \, \mathrm{d}\xi \, \mathrm{d}\eta \\ 
 &\leqslant C\lambda^{-\frac{4}{p-1}}\left( \frac{\lambda}{\nu} \right)^3 \left( \int_{|\xi| \leqslant \frac{\lambda}{4\nu}|v|} |v|^{2s_1} \| \mathcal{F}_x[w](\xi, \cdot)\|_{L^2_y}^2 \, \mathrm{d}\xi + \int_{|\xi| \geqslant \frac{\lambda}{4\nu}|v|} \| \mathcal{F}_x[w](\xi, \cdot)\|_{L^2_y}\, \mathrm{d}\xi \right) \\ 
 &\leqslant C\lambda^{-\frac{4}{p-1}}\left( \frac{\lambda}{\nu} \right)^3 \left( |v|^{2s_1} + C_N \left( \frac{\lambda}{\nu}|v| \right)^{-N} \right) 
 \leqslant C\lambda^{-\frac{4}{p-1}}\left( \frac{\lambda}{\nu} \right)^3 |v|^{2s_1}
\end{align*}
for all $N < \infty$ since $w \in \mathcal{S}(\mathbb{R}^2)$. 
In the last step we use $\nu \leqslant \lambda$. 
Hence \eqref{est_initialdata} follows. 
\eqref{est_difference_initialdata} is clear from $u^{(a, \nu, \lambda, v)}(0) - u^{(a', \nu, \lambda, v)}(0) = (a - a')u^{(1, \nu, \lambda, v)}(0)$ and \eqref{est_initialdata}. 
Finally we prove \eqref{solution_lower_bound} below. 
From \eqref{u},  
\begin{align*}
 &\|u^{(a, \nu, \lambda, v)}(t) - u^{(a', \nu, \lambda, v)}(t)\|_{H^{s_1, s_2}}^2 \\ 
 &= \lambda^{-\frac{4}{p-1}} \left\|e^{-\frac{i}{2}v \cdot x} \left( \phi^{(a, \nu)}\left( \frac{t}{\lambda^2}, \frac{\nu}{\lambda}(x + vt), \frac{\nu^2}{\lambda^2}y \right) - \phi^{(a', \nu)}\left( \frac{t}{\lambda^2}, \frac{\nu}{\lambda}(x + vt), \frac{\nu^2}{\lambda^2}y \right) \right)\right\|_{H^{s_1, s_2}}^2.  
\end{align*}   
Here 
\begin{align*}
 &\left\|e^{-\frac{i}{2}v \cdot x} \left( \phi^{(a, \nu)}\left( \frac{t}{\lambda^2}, \frac{\nu}{\lambda}(x + vt), \frac{\nu^2}{\lambda^2}y \right) - \phi^{(a', \nu)}\left( \frac{t}{\lambda^2}, \frac{\nu}{\lambda}(x + vt), \frac{\nu^2}{\lambda^2}y \right) \right)\right\|_{H^{s_1, s_2}}^2 \\ 
 &= C\left(\frac{\lambda}{\nu}\right)^3 \int_{\mathbb{R}^2} \left( 1 + \left| \frac{\nu}{\lambda}\xi - \frac{v}{2} \right| \right)^{2s_1} \left( 1 + \left|\frac{\nu^2}{\lambda^2}\eta \right| \right)^{2s_2} \left| \widehat{\phi}^{(a, \nu)} \left( \frac{t}{\lambda^2}, \xi, \eta \right) - \widehat{\phi}^{(a', \nu)} \left( \frac{t}{\lambda^2}, \xi, \eta \right) \right|^2 \, \mathrm{d}\xi \, \mathrm{d}\eta.   
\end{align*} 
Hence from $\nu \leqslant \lambda$ and $s_2 < 0$ we have 
\begin{align}
 &\|u^{(a, \nu, \lambda, v)}(t) - u^{(a', \nu, \lambda, v)}(t)\|_{H^{s_1, s_2}}^2 \notag \\ 
 &\geqslant C \lambda^{-\frac{4}{p-1}} \left( \frac{\lambda}{\nu} \right)^3 \int_{|\xi| \leqslant \frac{\lambda}{4\nu}|v|} |v|^{2s_1} \left\| \mathcal{F}_x[\phi^{(a, \nu)}] \left( \frac{t}{\lambda^2}, \xi, \cdot \right) - \mathcal{F}_x[\phi^{(a', \nu)}] \left( \frac{t}{\lambda^2}, \xi, \cdot \right) \right\|_{H^{s_2}}^2 \, \mathrm{d}\xi \notag \\ 
 &\geqslant C \lambda^{-\frac{4}{p-1}} \left( \frac{\lambda}{\nu} \right)^3 |v|^{2s_1} \Biggl( \left\| \phi^{(a, \nu)}\left( \frac{t}{\lambda^2} \right) - \phi^{(a', \nu)} \left( \frac{t}{\lambda^2} \right) \right\|_{H^{0, s_2}}^2 \notag \\ 
 &\hspace{3em} - \int_{|\xi| \geqslant \frac{\lambda}{4\nu}|v|} \left( \left\| \mathcal{F}_x[\phi^{(a, \nu)}] \left( \frac{t}{\lambda^2}, \xi, \cdot \right) \right\|_{H^{s_2}}^2 + \left\| \mathcal{F}_x[\phi^{(a', \nu)}]\left( \frac{t}{\lambda^2}, \xi, \cdot \right) \right\|_{H^{s_2}}^2 \right)\, \mathrm{d}\xi \Biggr).   
                   \label{solution_difference}
\end{align}  
By Lemma \ref{lemma1}, \eqref{decay_initial} and $s_2 < 0$, we have 
\begin{align}
 \|\phi(t)\|_{H^{k, s_2}} 
 &\leqslant C\|\phi(t)\|_{\mathcal{H}^{k, k}} 
  \leqslant C\|\phi(t) - \phi^{(0)}(t)\|_{\mathcal{H}^{k, k}} + C\|\phi^{(0)}(t)\|_{\mathcal{H}^{k, k}}  \notag \\ 
 &\leqslant C\nu + C(1 + |t|)^k 
  \leqslant C|\log \nu|^C 
                                 \label{upperbound}
\end{align}  
for $|t| \leqslant c|\log \nu|^c$. 
On the other hand, 
\begin{align}
 \|\phi(t)\|_{H^{k, s_2}} &\geqslant \left( \int_{|\xi| \geqslant \frac{\lambda}{4\nu}|v|} \LR{\xi}^{2k} \|\mathcal{F}_x[\phi](t, \xi, \cdot) \|_{H^{s_2}}^2 \, \mathrm{d}\xi \right)^{\frac{1}{2}} \notag \\ 
 &\geqslant C\left( \frac{\lambda}{\nu}|v| \right)^k \left( \int_{|\xi| \geqslant \frac{\lambda}{4\nu}|v|} \|\mathcal{F}_x[\phi](t, \xi, \cdot) \|_{H^{s_2}}^2 \, \mathrm{d}\xi \right)^{\frac{1}{2}}.  
                                 \label{lowerbound}
\end{align}
\eqref{upperbound} and \eqref{lowerbound} lead  
\begin{align*}
 \int_{|\xi| \geqslant \frac{\lambda}{4\nu}|v|} \left( \left\| \mathcal{F}_x[\phi^{(a, \nu)}]\left( \frac{t}{\lambda^2}, \xi, \cdot \right) \right\|_{H^{s_2}}^2 + \left\| \mathcal{F}_x[\phi^{(a', \nu)}]\left( \frac{t}{\lambda^2}, \xi, \cdot \right) \right\|_{H^{s_2}}^2 \right)\, \mathrm{d}\xi 
 \leqslant C|\log \nu|^C \left( \frac{\lambda}{\nu} |v| \right)^{-2k}  
\end{align*}
when $\left| \frac{t}{\lambda^2} \right| \leqslant c|\log \nu|^c$. 
Then 
\begin{align*}
 &(\text{RHS} \ \text{of} \ \eqref{solution_difference}) \\ 
 &\geqslant C \lambda^{-\frac{4}{p-1}} \left( \frac{\lambda}{\nu} \right)^3 |v|^{2s_1} \left( \left\| \phi^{(a, \nu)}\left( \frac{t}{\lambda^2} \right) - \phi^{(a', \nu)} \left( \frac{t}{\lambda^2} \right) \right\|_{H^{0, s_2}}^2 - C |\log \nu|^C \left( \frac{\lambda}{\nu}|v| \right)^{-2k} \right).
\end{align*}
Hence from $s_1 < 0$ and $|v| \geqslant 1$ we obtain \eqref{solution_lower_bound}.  

\end{proof}

\section{Norm inflation} 
In this section, we prove Theorem \ref{norm_inflation}. 
We consider either (i) 
$s_1, s_2 \geqslant 0$ except for $s_1 = s_2 = 0$ and $s_1 + 2s_2 < \frac{3}{2}-\frac{2}{p-1}$ 
or 
(ii) 
$s_1, s_2 \leqslant -\frac{1}{2}$.     
In addition we assume that $0 < \lambda \leqslant \nu \ll 1$. 

\begin{proof}[Proof of Theorem \ref{norm_inflation}]
We see from $\phi^{(a, \nu)}(0, x, y) = aw(x, y), 0 \neq w \in \mathcal{S}(\mathbb{R}^2)$ that 
\begin{align*}
 &u^{(a, \nu, \lambda, 0)}(0, x, y) 
 = \lambda^{-\frac{2}{p-1}}\phi^{(a, \nu)}\left( 0, \frac{\nu}{\lambda}x, \frac{\nu^2}{\lambda^2}y \right) 
 = a\lambda^{-\frac{2}{p-1}} w\left( \frac{\nu}{\lambda}x, \frac{\nu^2}{\lambda^2}y \right), \\ 
 &[u^{(a, \nu, \lambda, 0)}(0)]^{\wedge}(\xi, \eta)                                                 
 = a\lambda^{-\frac{2}{p-1}}\left( \frac{\lambda}{\nu} \right)^3 \widehat{w}\left( \frac{\lambda}{\nu}\xi,  \frac{\lambda^2}{\nu^2}\eta \right).   
\end{align*} 
Hence if $\lambda \leqslant \nu$, then 
\begin{align}
 &\|u^{(a, \nu, \lambda, 0)}(0)\|_{H^{s_1, s_2}}^2 
 = a^2 \lambda^{-\frac{4}{p-1}} \left( \frac{\lambda}{\nu} \right)^3 \int_{\mathbb{R}^2} \left( 1 + \left|\frac{\nu}{\lambda}\xi\right|^2 \right)^{s_1} \left( 1 + \left| \frac{\nu^2}{\lambda^2}\eta \right|^2 \right)^{s_2} |\widehat{w}(\xi, \eta)|^2 \, \mathrm{d}\xi \, \mathrm{d}\eta \notag \\ 
 &\leqslant C \lambda^{-\frac{4}{p-1}} \left( \frac{\lambda}{\nu} \right)^{3-4s_2} \left( \int_{|\xi| \geqslant \frac{\lambda}{\nu}} \left( \frac{\nu}{\lambda}|\xi| \right)^{2s_1} \|\mathcal{F}_x[w](\xi, \cdot)\|_{H^{s_2}}^2 \, \mathrm{d}\xi + \int_{|\xi| \leqslant \frac{\lambda}{\nu}} \|\mathcal{F}_x[w](\xi, \cdot)\|_{H^{s_2}}^2 \, \mathrm{d}\xi \right) \notag \\ 
 &= C\lambda^{-\frac{4}{p-1}} \left( \frac{\lambda}{\nu} \right)^{3-2s_1-4s_2} \Biggl( \int_{\mathbb{R}} |\xi|^{2s_1} \|\mathcal{F}_x[w](\xi, \cdot)\|_{H^{s_2}}^2 \, \mathrm{d}\xi \notag \\ 
 &\hspace{7em} - \int_{|\xi| \leqslant \frac{\lambda}{\nu}} \left( |\xi|^{2s_1} - \left( \frac{\lambda}{\nu} \right)^{2s_1}\right) \|\mathcal{F}_x[w](\xi, \cdot)\|_{H^{s_2}}^2 \, \mathrm{d}\xi \Biggr). 
               \label{uanulambda00}
\end{align} 
Thus if $\lambda \leqslant \nu$, then 
\begin{align*}
 \|u^{(a, \nu, \lambda, 0)}(0)\|_{H^{s_1, s_2}} 
 &\leqslant C \lambda^{-\frac{2}{p-1}} \left( \frac{\lambda}{\nu} \right)^{\frac{3}{2}-s_1-2s_2}\left( 1 + O\left(\left( \frac{\lambda}{\nu} \right)^{s_1+\frac{1}{2}} \right) \right) \\ 
 &\leqslant C \lambda^{-\frac{2}{p-1}} \left( \frac{\lambda}{\nu} \right)^{\frac{3}{2}-s_1-2s_2} 
  = C \lambda^{\frac{3}{2}-\frac{2}{p-1}-s_1-2s_2}\nu^{s_1+2s_2-\frac{3}{2}}.  
\end{align*}
Taking $\lambda = \nu^{\sigma}, \sigma := \frac{\frac{3}{2} - s_1 -2s_2 + \delta}{\frac{3}{2} - \frac{2}{p-1} - s_1 - 2s_2} > 1$ and $\delta > 0$ be such that $\nu^{\delta} = \varepsilon$, then we have 
\begin{align*}
 \|u^{(a, \nu, \lambda, 0)}(0)\|_{H^{s_1, s_2}} \leqslant C\varepsilon.  
\end{align*} 
Similarly from $[u^{(a, \nu, \lambda, 0)}(\lambda^2 t)](\xi, \eta) = \lambda^{-\frac{2}{p-1}} \left( \frac{\lambda}{\nu} \right)^3 \widehat{\phi}^{(a, \nu)} \left(t, \frac{\lambda}{\nu}\xi, \frac{\lambda^2}{\nu^2}\eta \right)$, $s_1, s_2 \geqslant 0$ except for $s_1 = s_2 = 0$ and $\lambda \leqslant \nu$, we see 
\begin{align*}
 \|u^{(a, \nu, \lambda, 0)}(\lambda^2 t)\|_{H^{s_1, s_2}}^2 
 &= C\lambda^{-\frac{4}{p-1}} \left( \frac{\lambda}{\nu} \right)^3 \int_{\mathbb{R}^2} \left( 1 + \left|\frac{\nu}{\lambda} \xi \right| \right)^{2s_1} \left( 1 + \left|\frac{\nu^2}{\lambda^2} \eta \right| \right)^{2s_2} |\widehat{\phi}^{(a, \nu)}(t, \xi, \eta)|^2 \, \mathrm{d}\xi \, \mathrm{d}\eta \\ 
 &\geqslant C\lambda^{-\frac{4}{p-1}} \left( \frac{\lambda}{\nu} \right)^3 \int_{|\xi| \geqslant 1, |\eta| \geqslant 1} \left( \frac{\nu}{\lambda} |\xi| \right)^{2s_1} \left( \frac{\nu^2}{\lambda^2} |\eta| \right)^{2s_2} |\widehat{\phi}^{(a, \nu)}(t, \xi, \cdot)|^2 \, \mathrm{d}\xi \, \mathrm{d}\eta \\ 
 &\geqslant C \lambda^{-\frac{4}{p-1}} \left( \frac{\lambda}{\nu} \right)^{3-2s_1-4s_2} (\|\phi^{(a, \nu)}(t)\|_{H^{s_1, s_2}}^2 - \|\phi^{(a, \nu)}(t)\|_{L^2}^2).    
\end{align*}  
We estimate $\|\phi^{(a, \nu)}(t)\|_{H^{s_1, s_2}}$ below. 
If we can show 
\begin{align} 
 \|\phi^{(a, 0)}(t)\|_{H^{s_1, s_2}} \sim t^{s_1+s_2}, \qquad t \gg 1 
                \label{phia0_decay} 
\end{align}  
then we apply Lemma \ref{lemma1} and obtain $\|\phi^{(a, \nu)}(t)\|_{H^{s_1, s_2}} \sim t^{s_1+s_2}$.   
This leads  
\begin{align*}
 \|u^{(a, \nu, \lambda, 0)}(\lambda^2 t)\|_{H^{s_1, s_2}} 
 \geqslant C \lambda^{-\frac{2}{p-1}} \left( \frac{\lambda}{\nu} \right)^{\frac{3}{2} - s_1 - 2s_2}\|\phi^{(a, \nu)}(t)\|_{H^{s_1, s_2}} 
 \sim \varepsilon t^{s_1 + s_2},                                         
\end{align*} 
when $\nu \ll 1$ and $1 \ll t \leqslant c|\log \nu|^c$.  
\eqref{phia0_decay} is confirmed by the same manner as in the proof of Theorem 2 (page 17 \cite{CCT1}). 
Indeed, from 
\begin{align} 
 \phi^{(a, 0)}(t, x, y) = aw(x, y) e^{i\mu a^{p-1}t |w(x, y)|^{p-1}}   
                  \label{phi_a0}
\end{align} 
and 
\begin{align*}
 \partial_x^{j_1} \partial_y^{j_2} \phi^{(a, 0)}(t, x, y) = aw(x, y) t^{j_1+j_2} (i\mu a^{p-1} \nabla_{x, y} |w(x, y)|^{p-1})^{j_1+j_2} e^{i\mu a^{p-1}t|w(x, y)|^{p-1}} + O(t^{j_1+j_2})  
\end{align*}  
for all integers $j_1, j_2 \geqslant 0$ if $p$ is an odd integer, and all $j_1, j_2$ satisfying $0 \leqslant j_1 + j_2 \leqslant p-1$ otherwise, 
we have 
\begin{align*}
 \|\phi^{(a, 0)}(t)\|_{H^{j_1, j_2}} \sim t^{j_1+j_2}. 
\end{align*}   
Under the assumptions of $s_1, s_2, k, p$ in Theorem \ref{norm_inflation}, 
we have \eqref{phia0_decay}. 

Next, we prove the case $s_1, s_2 \leqslant -\frac{1}{2}$. 
From \eqref{uanulambda00} and 
if $\|\mathcal{F}_x[w](\xi, \cdot)\|_{H^{s_2}} \sim O(|\xi|^{\kappa})$ as $\xi \to 0$ for some 
$\kappa > -s-\frac{1}{2}$, then from $\lambda \leqslant \nu$, 
\begin{align*} 
 \int_{|\xi| \leqslant \frac{\lambda}{\nu}} \| \mathcal{F}_x[w](\xi, \cdot)\|_{H^{s_2}} \left( \left( \frac{\lambda}{\nu} \right)^{2s_1} - |\xi|^{2s_1} \right)\, \mathrm{d}\xi 
 \leqslant C\left( \frac{\lambda}{\nu} \right)^{2\kappa + 2s_1 + 1} 
 < \infty. 
\end{align*}
This leads $\|u^{(a, \nu, \lambda, 0)}(0)\|_{H^{s_1, s_2}} \leqslant C\varepsilon$. 
We will show that for any $\varepsilon > 0$, 
it holds $\|u^{(a, \nu, \lambda, 0)}(\lambda^2)\|_{H^{s_1, s_2}} > \frac{1}{\varepsilon}$ 
for sufficiently small $0 < \lambda \leqslant \nu$.  
We check the case $s_1, s_2 < -\frac{1}{2}$ at first. 
\begin{align*}
 \left|\int_{\mathbb{R}^2}\phi^{(a, 0)}(1, x, y)\, \mathrm{d}x \, \mathrm{d}y\right| \geqslant C 
\end{align*}
is equivalent to $|\widehat{\phi}^{(a, 0)}(1, 0, 0)| \geqslant C$. 
Here $\phi^{(a, 0)}(1)$ is rapidly decreasing, hence by continuity we see for $|\xi|, |\eta| \leqslant c$ with $0 < c \ll 1$ that 
\begin{align}
 |\widehat{\phi}^{(a, 0)}(1, \xi, \eta)| \geqslant C. 
                \label{phia01}
\end{align}  
By $|\xi|, |\eta| \leqslant c \ll 1$, the Cauchy-Schwarz inequality and Lemma \ref{lemma1}, we obtain  
\begin{align}
 |\widehat{\phi}^{(a, \nu)}(1, \xi, \eta) - \widehat{\phi}^{(a, 0)}(1, \xi, \eta)| 
 &\leqslant \int_{|\xi|, |\eta| \leqslant c \ll 1} |\phi^{(a, \nu)}(1, \xi, \eta) - \phi^{(a, 0)}(1, \xi, \eta)| \, \mathrm{d}\xi \, \mathrm{d}\eta \notag \\ 
 &\leqslant C\| \phi^{(a, \nu)}(1) - \phi^{(a, 0)}(1)\|_{\mathcal{H}^{k, k}}  
  \leqslant C\nu. 
                   \label{difference_phia01}
\end{align}
Collecting \eqref{phia01} and \eqref{difference_phia01}, we have   
\begin{align}
 |\widehat{\phi}^{(a, \nu)}(1, \xi, \eta)| \geqslant C
                      \label{phianu1}
\end{align} 
for $|\xi|, |\eta| \leqslant c$ and sufficiently small $\nu > 0$. 
From \eqref{u}, we compute 
\begin{align*}
 \widehat{u}^{(a, \nu, \lambda, 0)}(\lambda^2, \xi, \eta) = \lambda^{-\frac{2}{p-1}} \left( \frac{\lambda}{\nu} \right)^3 \widehat{\phi}^{(a, \nu)}\left( 1, \frac{\lambda}{\nu}\xi, \frac{\lambda^2}{\nu^2}\eta \right). 
\end{align*} 
If $|\xi| \leqslant c\frac{\nu}{\lambda}, |\eta| \leqslant c\frac{\nu^2}{\lambda^2}$, then by \eqref{phianu1}, we have 
$|\widehat{u}^{(a, \nu, \lambda, 0)}(\lambda^2, \xi, \eta)| 
 \geqslant C\lambda^{-\frac{2}{p-1}}\left( \frac{\lambda}{\nu} \right)^3$. 
From this and $s_1, s_2 < -\frac{1}{2}$ lead  
\begin{align}
 \|u^{(a, \nu, \lambda, 0)}(\lambda^2)\|_{H^{s_1, s_2}} 
 &\geqslant C\lambda^{-\frac{2}{p-1}} \left( \frac{\lambda}{\nu} \right)^3 \left( \int_{|\xi| \leqslant c\frac{\nu}{\lambda}, |\nu| \leqslant c\frac{\nu^2}{\lambda^2}} (1+|\xi|)^{2s_1} (1+|\eta|)^{2s_2} \, \mathrm{d}\xi \, \mathrm{d}\eta \right)^{\frac{1}{2}} 
                         \label{ulambda^2} \\ 
 &
 = C\lambda^{-\frac{2}{p-1}} \left( \frac{\lambda}{\nu}\right)^{\frac{3}{2}-s_1-2s_2} \left( \frac{\lambda}{\nu} \right)^{\frac{3}{2}+s_1+2s_2}  
 = C\varepsilon \left( \frac{\lambda}{\nu} \right)^{\frac{3}{2}+s_1+2s_2}.  \notag 
\end{align}
If $\lambda \leqslant \nu$, then 
$\left( \frac{\lambda}{\nu} \right)^{\frac{3}{2}+s_1+2s_2} \to \infty$ as $\nu \to 0$ 
since $s_1 + 2s_2 < -\frac{3}{2}$. 
Thus the claim follows by taking $\nu$ sufficiently small depending on $\varepsilon$. 
We consider the case $s_1 = s_2 = -\frac{1}{2}$ secondly. 
From \eqref{ulambda^2}, $\lambda \leqslant \nu$ and 
\begin{align*}
 \int_{|\xi| \leqslant c\frac{\nu}{\lambda}, |\nu| \leqslant c\frac{\nu^2}{\lambda^2}} (1+|\xi|)^{-1} (1+|\eta|)^{-1} \, \mathrm{d}\xi \, \mathrm{d}\eta 
 \geqslant C\log \left( \frac{\nu}{\lambda} \right), 
\end{align*} 
we obtain 
\begin{align*}
 \|u^{(a, \nu, \lambda, 0)}(\lambda^2)\|_{H^{s_1, s_2}} 
 \geqslant C\lambda^{-\frac{2}{p-1}}\left( \frac{\lambda}{\nu} \right)^3 \log \left( \frac{\nu}{\lambda} \right)        
 = C\varepsilon \log \left( \frac{\nu}{\lambda} \right). 
\end{align*}   
Thus the desired result follows. 
It remains to show the claim when $s_1 = -\frac{1}{2}, s_2 < -\frac{1}{2}$ or $s_1 < -\frac{1}{2}, s_2 = -\frac{1}{2}$. 
These cases are handled in a similar way, hence we omit the detail. 

\end{proof}


\section{Decoherence} 
In this section, we prove Theorem \ref{Decoherence}. 

\begin{proof}[Proof of Theorem \ref{Decoherence}] 
Firstly, we consider the case $s_1, s_2 < 0$. 
Set $\lambda^{-\frac{2}{p-1}} \left( \frac{\lambda}{\nu} \right)^{\frac{3}{2}} |v|^{s_1} = \varepsilon$, $\lambda = \nu^{\sigma}$ such that $0 < \varepsilon, \sigma \ll 1$ and $\sigma$ will be fixed later. 
This condition is equivalent to $|v| = \nu^{\frac{1}{s_1}\left( \frac{3}{2}(1-\sigma) + \frac{2\sigma}{p-1} \right)} \cdot \varepsilon^{\frac{1}{s_1}}$, namely $|v|$ grows negative power of $\nu$. 
From \eqref{est_initialdata} and \eqref{est_difference_initialdata} we obtain 
\begin{align*}
 &\|u^{(a, \nu, \lambda, v)}(0)\|_{H^{s_1, s_2}} + \|u^{(a', \nu, \lambda, v)}(0)\|_{H^{s_1, s_2}} \leqslant C\varepsilon, \\ 
 &\|u^{(a, \nu, \lambda, v)}(0) - u^{(a', \nu, \lambda, v)}(0)\|_{H^{s_1, s_2}} \leqslant C\varepsilon |a-a'|.  
\end{align*} 
Taking $\delta = \varepsilon |a - a'|$ then we obtain \eqref{estimate_initialdata} and \eqref{difference_initialdata}. 
There exists some $T = T(a, a') > 0$ such that  
\begin{align} 
 \|\phi^{(a, 0)}(T) - \phi^{(a', 0)}(T)\|_{H^{0, s_2}} 
 = \|aw e^{i\mu T a^{p-1} |w|^{p-1}} - a'w e^{i\mu T a'^{p-1}|w|^{p-1}}\|_{H^{0, s_2}}  
 \geqslant C.  
                  \label{nu00lower}
\end{align}
Here $C > 0$ is independent of $a, a'$ and fix this $T$. 
By Lemma \ref{lemma1}, for all $0 < T \leqslant c|\log \nu|^c$ 
\begin{align}
 \|\phi^{(a, \nu)}(T) - \phi^{(a, 0)}(T)\|_{H^{k, k}(\mathbb{R}^2)} \leqslant C\nu,  
                  \label{nu0bound}
\end{align} 
where $\phi^{(a, 0)}(t)$ is defined in \eqref{phi_a0}.   
From \eqref{nu00lower} and \eqref{nu0bound}, we have    
\begin{align*}
 \|\phi^{(a, \nu)}(T) - \phi^{(a', \nu)}(T)\|_{H^{0, s_2}} 
 &\geqslant \|\phi^{(a, 0)}(T) - \phi^{(a', 0)}(T)\|_{H^{0, s_2}} - \|\phi^{(a, \nu)}(T) - \phi^{(a, 0)}(T)\|_{H^{0, s_2}} \\ 
 &\hspace{3em} - \|\phi^{(a', 0)}(T) - \phi^{(a', \nu)}(T)\|_{H^{0, s_2}} \\ 
 &\geqslant C - 2C\nu 
  \geqslant C    
\end{align*} 
provided $s_2 < 0$ and $\nu > 0$ is sufficiently small. 
From \eqref{solution_lower_bound}, if $0 < T \leqslant c|\log \nu|^c$, then we have 
\begin{align}
 &\|u^{(a, \nu, \lambda, v)}(\lambda^2 T) - u^{(a', \nu, \lambda, v)}(\lambda^2 T)\|_{H^{s_1, s_2}} \notag \\ 
 &\geqslant C\varepsilon \left(\|\phi^{(a, \nu)}(T) - \phi^{(a', \nu)}(T)\|_{H^{0, s_2}} - C |\log \nu|^C \left( \frac{\lambda}{\nu} \right)^{-k}|v|^{-s_1-k}\right) \notag \\ 
 &\geqslant C\varepsilon - C\varepsilon \left( \frac{\lambda}{\nu} \right)^{-k} |v|^{-s_1-k}|\log \nu|^C.   
                    \label{decoherence}
\end{align} 
We set $\lambda = \nu^{\sigma}$ with sufficiently small $\sigma > 0$ such that  
\begin{align*}
 \left( \frac{\lambda}{\nu} \right)^{-k} |v|^{-s_1-k}|\log \nu|^C 
 = \nu^{k-\frac{3(s_1+k)}{2s_1}+\left( \frac{s_1+k}{s_1}\left( \frac{3}{2} - \frac{2}{p-1} \right) -k  \right) \sigma} |\log \nu|^C \, \varepsilon^{-\frac{s_1+k}{s_1}} 
 \to 0   
\end{align*} 
as $\nu \to 0$. Indeed, if $s_1 < 0$ and $k > \frac{3}{2}$ we see 
$k - \frac{3(s_1+k)}{2s_1} > -\frac{9}{4}s_1 > 0$. 
Therefore 
\begin{align*}
 (\text{RHS}\ \text{of}\ \eqref{decoherence}) \geqslant C\varepsilon. 
\end{align*}  
As $\nu \to 0$, we have $\lambda = \nu^{\sigma} \to 0$, hence $\lambda^2 T \to 0$.  
Therefore we conclude \eqref{difference_solution}.  
Secondly we check the case $s_1 = s_2 = 0$. 
We see 
\begin{align*}
 \|\phi^{(a, 0)}(t) - \phi^{(a', 0)}(t)\|_{L^2} \geqslant C > 0 
\end{align*} 
when $t \geqslant C|a - a'|^{-1}$. 
If $\nu > 0$ is sufficiently small and $C|a-a'|^{-1} \leqslant t \leqslant c|\log \nu|^c$, then \eqref{est_small_dispersion} gives 
\begin{align*}
 \|\phi^{(a, \nu)}(t) - \phi^{(a', \nu)}(t)\|_{L^2} 
 &\geqslant \|\phi^{(a, 0)}(t) - \phi^{(a', 0)}(t)\|_{L^2} - \|\phi^{(a, \nu)}(t) - \phi^{(a, 0)}(t)\|_{L^2} \\ 
 &\hspace{3em} - \|\phi^{(a', 0)}(t) - \phi^{(a', \nu)}(t)\|_{L^2} \\ 
 &\geqslant C - 2C\nu 
  \geqslant C.      
\end{align*} 
We see 
\begin{align*}
 \|u^{(a, \nu, \lambda, 0)}(\lambda^2 t) - u^{(a', \nu, \lambda, 0)}(\lambda^2 t) \|_{L^2} 
 = \lambda^{-\frac{2}{p-1}} \left( \frac{\lambda}{\nu} \right)^{\frac{3}{2}} \|\phi^{(a, \nu)}(t) - \phi^{(a', \nu)}(t)\|_{L^2} 
 \geqslant C\varepsilon. 
\end{align*}
Of course it holds that 
\begin{align*}
 \|u^{(a, \nu, \lambda, 0)}(0)\|_{L^2} \leqslant C\varepsilon,\qquad 
 \|u^{(a, \nu \lambda, 0)}(0) - u^{(a', \nu, \lambda, 0)}(0)\|_{L^2} \leqslant C\varepsilon |a - a'|. 
\end{align*}
Therefore the claim follows for the case $s_1 = s_2 = 0$. 
The cases $s_1 = 0$ and $s_2 < 0$ or $s_1 < 0$ and $s_2 = 0$ would be similar, hence we omit the proof.  
  
\end{proof}

\section{Decoherence at the critical regularity for focusing case}
In this section, we consider \eqref{NLS} with $\mu = -1$ (focusing case): 
\begin{align}
 i\partial_t u + \partial_x^2 u - |D_y|u = -|u|^{p-1}u. 
               \label{focusing_hws}
\end{align} 
For $1 < p < 5$, there exists the standing wave for \eqref{focusing_hws}, namely 
$u(t, x, y) = e^{i\beta t} Q_{\beta}(x, y), \beta > 0$. 
Here $Q_{\beta}$ satisfies the following equation (see (1.5) \cite{BIK}): 
\begin{align*} 
 -\partial_x^2 Q + |D_y|Q + \beta Q - |Q|^{p-1}Q = 0. 
\end{align*}   
We note that 
\begin{align}
 Q_{\beta}(x, y) = \beta^{\frac{1}{p-1}} Q_1(\sqrt{\beta}x, \beta y). 
              \label{scale} 
\end{align}

\begin{proof}[Proof of Theorem \ref{decoherence_critical}] 
We set two solutions $u_1, u_2$ of \eqref{focusing_hws} as  
\EQ{ \label{standing_wave}
 u_j(t, x, y) = e^{i\beta_j t} Q_{\beta_j}(x, y), 
} 
where $\beta_j > 0, j= 1, 2$.  
From \eqref{scale}, \eqref{standing_wave}, change of variables and $s_1 + 2s_2 = \frac{3}{2} - \frac{2}{p-1}$ we see 
\begin{align*}
 &\|u_1(0) - u_2(0)\|_{\dot{H}^{s_1, s_2}}^2 
  = \|Q_{\beta_1} - Q_{\beta_2}\|_{\dot{H}^{s_1, s_2}}^2 \\ 
 &= \int_{\mathbb{R}^2} |\xi|^{2s_1} |\eta|^{2s_2} \left| \beta_1^{\frac{1}{p-1} - \frac{3}{2}} \widehat{Q}_1 \left( \frac{\xi}{\sqrt{\beta_1}}, \frac{\eta}{\beta_1}  \right) - \beta_2^{\frac{1}{p-1} - \frac{3}{2}} \widehat{Q}_1 \left( \frac{\xi}{\sqrt{\beta_2}}, \frac{\eta}{\beta_2} \right) \right|^2 \, \mathrm{d}\xi \, \mathrm{d}\eta \\ 
 &= \int_{\mathbb{R}^2} |\xi|^{2s_1} |\eta|^{2s_2} |\widehat{Q}_1(\xi, \eta)|^2 \, \mathrm{d}\xi \, \mathrm{d}\eta + \left( \frac{\beta_2}{\beta_1} \right)^{\frac{2}{p-1} - 3} \int_{\mathbb{R}^2} |\xi|^{2s_1} |\eta|^{2s_2} \left| \widehat{Q}_1\left( \sqrt{\frac{\beta_1}{\beta_2}}\, \xi, \frac{\beta_1}{\beta_2}\, \eta \right) \right|^2 \, \mathrm{d}\xi \, \mathrm{d}\eta \\ 
 &\qquad - 2 \left( \frac{\beta_2}{\beta_1} \right)^{\frac{1}{p-1} - \frac{3}{2}} \int_{\mathbb{R}^2} |\xi|^{2s_1} |\eta|^{2s_2} \, \widehat{Q}_1(\xi, \eta) \, \overline{\widehat{Q}}_1\left( \sqrt{\frac{\beta_1}{\beta_2}}\, \xi, \frac{\beta_1}{\beta_2}\, \eta \right) \, \mathrm{d}\xi \, \mathrm{d}\eta \\ 
 &\to 0  
\end{align*} 
as $\frac{\beta_2}{\beta_1} \to 1$. 
Hence \eqref{difference_initialdata} follows. 
Finally, we check \eqref{difference_solution}. 
\begin{align*}
 \|u_1(t) - u_2(t)\|_{\dot{H}^{s_1, s_2}}^2 
 = \|u_1(t)\|_{\dot{H}^{s_1, s_2}}^2 + \|u_2(t)\|_{\dot{H}^{s_1, s_2}}^2 - 2\LR{u_1(t), u_2(t)}_{s_1, s_2}.   
\end{align*} 
Similar to the above argument, we see 
$\|u_i(t)\|_{\dot{H}^{s_1, s_2}} = \|Q_1\|_{\dot{H}^{s_1, s_2}}$ for $i = 1, 2$. 
\begin{align*}
 &\LR{u_1(t), u_2(t)}_{s_1, s_2} 
  = \int_{\mathbb{R}^2} |\xi|^{2s_1} |\eta|^{2s_2} e^{i(\beta_1 - \beta_2)t}\,  \widehat{Q}_{\beta_1}(\xi, \eta)\, \overline{\widehat{Q}}_{\beta_2}(\xi, \eta) \, \mathrm{d}\xi \, \mathrm{d}\eta \\ 
 &= e^{i(\beta_1 - \beta_2)t} (\beta_1 \beta_2)^{\frac{1}{p-1} - \frac{3}{2}} \int_{\mathbb{R}^2} |\xi|^{2s_1} |\eta|^{2s_2}\, \widehat{Q}_1\left( \frac{\xi}{\sqrt{\beta_1}}, \frac{\eta}{\beta_1} \right)\, \overline{\widehat{Q}}_1\left( \frac{\xi}{\sqrt{\beta_2}}, \frac{\eta}{\beta_2} \right)\, \mathrm{d}\xi \, \mathrm{d}\eta \\ 
 &= e^{i(\beta_1 - \beta_2)t} (\beta_1 \beta_2)^{\frac{1}{p-1} - \frac{3}{2}} \beta_1^{s_1 + 2s_2 + \frac{3}{2}} \int_{\mathbb{R}^2} |\xi|^{2s_1} |\eta|^{2s_2} \,  \widehat{Q}_1(\xi, \eta) \, \overline{\widehat{Q}}_1 \left( \sqrt{\frac{\beta_1}{\beta_2}}\, \xi, \frac{\beta_1}{\beta_2}\, \eta \right)\, \mathrm{d}\xi \, \mathrm{d}\eta.  
\end{align*} 
If we take $\beta_1 = n + 1, \beta_2 = n \in \mathbb{N}, t = (2n + 1)\pi$, then we obtain 
\begin{align*}
 \LR{u_1(t), u_2(t)}_{s_1, s_2} \to -\|Q_1\|_{\dot{H}^{s_1, s_2}}^2,\qquad  n \to \infty  
\end{align*} 
since $s_1 + 2s_2 = \frac{3}{2} - \frac{2}{p-1}$. 
Therefore 
\begin{align*}
 \|u_1(t) - u_2(t)\|_{\dot{H}^{s_1, s_2}}^2 \to 4\|Q_1\|_{\dot{H}^{s_1, s_2}}^2, \qquad n \to \infty. 
\end{align*} 
Hence \eqref{difference_solution} follows.  

\if0
From \eqref{scale}, \eqref{traveling_wave} we see for $i = 1, 2$ 
\begin{align*}
 \|u_{\gamma_i}(0)\|_{\dot{H}^{s_1, s_2}}^2 
 &= \|Q_{\beta, \gamma_i}\|_{\dot{H}^{s_1, s_2}}^2   
  = \beta^{\frac{2}{p-1}} \|Q_{1, \gamma_i}(\sqrt{\beta}\cdot, \beta \cdot)\|_{\dot{H}^{s_1, s_2}}^2 \\ 
 &= \beta^{\frac{2}{p-1} - \frac{3}{2} + s_1 + 2s_2} \|Q_{1, \gamma_i}\|_{\dot{H}^{s_1, s_2}}^2   
  = \|Q_{1, \gamma_i}\|_{\dot{H}^{s_1, s_2}}^2.   
\end{align*}   
We take $\varepsilon = \|Q_{1, \gamma_i}\|_{H^{s_1, s_2}}$ then \eqref{estimate_initialdata} follows. 
Similarly, we see 
\begin{align*}
 \|u_{\gamma_1}(0) - u_{\gamma_2}(0)\|_{\dot{H}^{s_1, s_2}}^2 = \|Q_{1, \gamma_1} - Q_{1, \gamma_2}\|_{\dot{H}^{s_1, s_2}}^2, 
\end{align*} 
hence \eqref{difference_initialdata} follows by taking $\delta = \|Q_{1, \gamma_1} - Q_{1, \gamma_2}\|_{H^{s_1, s_2}}$.   
Finally, we check \eqref{difference_solution}. 
\begin{align*}
 \|u_{\gamma_1}(t) - u_{\gamma_2}(t)\|_{\dot{H}^{s_1, s_2}}^2 
 = \|u_{\gamma_1}(t)\|_{\dot{H}^{s_1, s_2}}^2 + \|u_{\gamma_2}(t)\|_{\dot{H}^{s_1, s_2}}^2 - 2\LR{u_{\gamma_1}(t), u_{\gamma_2}(t)}_{s_1, s_2}.   
\end{align*} 
Similar to the above argument, we have 
$\|u_{\gamma_i}(t)\|_{\dot{H}^{s_1, s_2}} = \|Q_{1, \gamma_i}\|_{\dot{H}^{s_1, s_2}}$. 
From \eqref{scale}, \eqref{traveling_wave} and 
\begin{align*}
 \widehat{Q_{1, \gamma_i}}(\sqrt{\beta} \cdot, \beta (\cdot - \gamma_i t))(\xi, \eta)  
 = \beta^{-\frac{3}{2}} e^{-i\gamma_i \eta t} \, \widehat{Q_{1, \gamma_i}}\left( \frac{\xi}{\sqrt{\beta}}, \frac{\eta}{\beta} \right), \qquad (i =1, 2) 
\end{align*} 
we obtain  
\begin{align*}
 \LR{u_{\gamma_1}(t), u_{\gamma_2}(t)}_{s_1, s_2} 
 &= \beta^{\frac{2}{p-1}} \int_{\mathbb{R}^2} |\xi|^{2s_1} |\eta|^{2s_2} \widehat{Q_{1, \gamma_1}}(\sqrt{\beta}\cdot, \beta(\cdot - \gamma_1 t)) \, \overline{\widehat{Q_{1, \gamma_2}}}(\sqrt{\beta} \cdot, \beta (\cdot - \gamma_2 t)) \, \mathrm{d}\xi \, \mathrm{d}\eta \\ 
 &= \beta^{\frac{2}{p-1} - 3} \int_{\mathbb{R}^2} e^{-i\eta t (\gamma_1 - \gamma_2)} |\xi|^{2s_1} |\eta|^{2s_2} \widehat{Q_{1, \gamma_1}} \left( \frac{\xi}{\sqrt{\beta}}, \frac{\eta}{\beta} \right) \overline{\widehat{Q_{1, \gamma_2}}}\left( \frac{\xi}{\sqrt{\beta}}, \frac{\eta}{\beta} \right) \, \mathrm{d}\xi \, \mathrm{d}\eta \\ 
 &= \beta^{s_1 + 2s_2 + \frac{2}{p-1} - \frac{3}{2}} \int_{\mathbb{R}^2} e^{-i \beta \eta t(\gamma_1 - \gamma_2)} |\xi|^{2s_1} |\eta|^{2s_2} \widehat{Q_{1, \gamma_1}}(\xi, \eta)\, \overline{\widehat{Q_{1, \gamma_2}}}(\xi, \eta) \, \mathrm{d} \xi \, \mathrm{d}\eta \\ 
 &= \int_{\mathbb{R}^2} e^{-i \beta \eta t(\gamma_1 - \gamma_2)} |\xi|^{2s_1} |\eta|^{2s_2} \widehat{Q_{1, \gamma_1}}(\xi, \eta) \overline{\widehat{Q_{1, \gamma_2}}}(\xi, \eta) \, \mathrm{d} \xi \, \mathrm{d}\eta.   
\end{align*}  
If we take $\gamma_1 = n + 1, \gamma_2 = n \in \mathbb{N}$ and $\beta = n$, then the Riemann-Lebesgue lemma yields  
\begin{align*} 
\lim_{n \to \infty} \LR{u_{\gamma_1}(t), u_{\gamma_2}(t)}_{s_1, s_2} = 0. 
\end{align*} 
Therefore we have 
\begin{align*}
 \lim_{n \to \infty} \|u_{\gamma_1}(t) - u_{\gamma_2}(t)\|_{\dot{H}^{s_1, s_2}}^2 
 = 2 \lim_{n \to \infty} \|Q_{1, n}\|_{\dot{H}^{s_1, s_2}}^2   
\end{align*} 
and \eqref{difference_solution} follows by taking 
$\varepsilon = \lim_{n \to \infty} \|Q_{1, n}\|_{H^{s_1, s_2}}$. 
\fi 

\end{proof}

\section*{Acknowledgement}
The author is grateful to Prof. Yoshio Tsutsumi for carefully proofreading the manuscript and also would like to thank Prof. Hiroaki Kikuchi and Dr. Masayuki Hayashi JSPS Research Fellow for many useful discussions and for teaching me the property of the standing wave and the traveling wave solutions of this equation.  
The author is supported by JSPS KAKENHI Grant Number 820200500051.

\end{document}